\theoremstyle{plain}
\newtheorem{lemma}{Lemma}[section]
\newtheorem{theorem}[lemma]{Theorem}
\newtheorem{proposition}[lemma]{Proposition}
\newtheorem*{convention*}{Convention}
\theoremstyle{definition}
\newtheorem{definition}[lemma]{Definition}
\newtheorem{example}[lemma]{Example}
\newtheorem{remark}[lemma]{Remark}
\newtheorem*{definition*}{Definition}
\theoremstyle{remark}
\newcommand{\smallO}[1]{\ensuremath{\mathop{}\mathopen{}o\mathopen{}\left(#1\right)}}
\newcommand{\toto}{\rightrightarrows}
\newcommand{\func}[1]{\mathcal{C}^{\tiny \infty}(#1)}
\title[Poisson Integrators in Mechanics]{Numerical Methods in Poisson Geometry and their Application to Mechanics}
\author[O. Cosserat]{Oscar~Cosserat} 
\address{O. Cosserat:  LaSIE  -- CNRS \& La Rochelle University,
Av. Michel Cr\'epeau, 17042 La Rochelle Cedex 1, France}
\email{oscar.cosserat@univ-lr.fr}
\author[C. Laurent-Gengoux]{Camille~Laurent-Gengoux}
\address{C. Laurent-Gengoux:  Institut Elie Cartan de Lorraine (IECL), UMR 7502 --  3 rue Augustin Fresnel, 57000 Technop\^ole Metz, France}
\email{camille.laurent-gengoux@univ-lorraine.fr}
\author[V. Salnikov]{Vladimir~Salnikov} 
\address{V. Salnikov:  LaSIE  -- CNRS \&  La Rochelle University,
Av. Michel Cr\'epeau, 17042 La Rochelle Cedex 1, France}
\email{vladimir.salnikov@univ-lr.fr}
\date{\today}
\begin{document}

\begin{abstract}
We recall the question of geometric integrators in the context of Poisson geometry, and explain their construction. These Poisson integrators are tested in some mechanical examples. Their properties are illustrated numerically and they are compared to traditional methods.\\
\end{abstract}

\maketitle

\tableofcontents

\newpage


\section*{Introduction}

 Poisson geometry allows to describe a large class of conservative systems in mechanics, both for discrete and continuous media. Those may be obtained as a result of a reduction procedure or as an ad hoc model for evolution of some natural systems. Just to cite a few non-standard examples: chemistry (polymer dynamics, \cite{Ratiu2011}), plasticity (elastoplasticity, \cite{Liu2013}), population dynamics (\cite{vanhaecke2016}), liquid crystals theory (\cite{Ratiu2013}), thermodynamics (GENERIC formalism, \cite{Grmela2018}), control theory (active and kinematic constraints, \cite{Marle1990}). Moreover, the geometry of the Poisson structure matters to express symmetries, conservation laws and qualitative behaviour of dynamical systems.  
 
 On top of the purely mathematical significance of Poisson structures, for Hamiltonian differential equations, they provide the appropriate geometry to be preserved in numerical simulation thus potentially resulting in a very broad class of geometric integrators. The state of the art in this context concerns mostly \emph{symplectic integrators}, known for decades now (\cite{Verlet1967}), they correspond to symplectic structures, i.e. non-degenerate Poisson for systems defined on a canonical phase space, geometrically meaning on a cotangent bundle. To the best of our knowledge there are very few works treating more general Poisson structures. Most of them rely on Weinstein's splitting theorem (see for example \cite{daSilva1999}), which states that a Poisson manifold can be foliated by symplectic leaves, the natural idea that emerges is: ``restrict to the leaf containing the trajectory, apply symplectic integration on it''. 
 
 There are two conceptual difficulties with this ``naive'' approach to construct \emph{Poisson integrators}: the mentioned foliation is very rearly explicitly known, and in the generic situation it is singular, meaning that the successfull applications of the above strategy are rather exceptional particular cases. A way out was proposed in \cite{oscar}, it is based on integration of Poisson manifolds to (local) symplectic groupoids, geometrically this procedure can be viewed as a kind of desingularisation. 
 
 The goal of the current paper is to explain the subtleties occurring in this integration procedure while making it explicit and also present the features of resulting Poisson integrators. We will not go into mathematical proofs (a motivated reader is invited to consult \cite{oscar} for details), but rather focus on the precise behaviour of the constructed numerical methods. We will however provide all the necessary building blocks to make the paper self-consistent. 

 We can address one important detail already here: the mere definition of what is an appropriate notion of structure preserving numerical method in the context of Poisson geometry. For the symplectic case the usual strategy is to say that the the preservation of the symplectic form guarantees the energy conservation. In fact it is a bit subtler than that: a discrete symplectic flow indeed preserves the level surface of some Hamiltonian (suppose there is no topological obstruction for its global existence), but not necessarily the same that corresponds to the energy of the system and was defining the evolution of it. The difference however can be estimated: it will be oscillating around zero with an amplitude that can be made small for exponentially long time. For the Poisson case this subtlety is even more pronounced: the preservation of the Poisson structure alone does not guarantee the energy conservation, roughly speaking this is due to the existence of the ``degenerate'' directions. 
 
 We thus introduce a stronger notion of a \emph{Poisson Hamiltonian integrator} (PHI) -- a numerical method for which discretising a trajectory of a Hamiltonian system preserves the Poisson structure and coincides exactly with the trajectory of some time-dependent Hamiltonian, which equals to the initial one at the order of the method. This latter condition seems to be very strong, but in fact it is not: the Poisson Hamiltonian integrator requires the existence of such a time-dependent Hamiltonian but not constructively, in fact, only a theoretical existence is enough: we do not need, in general, to compute it. This unspecified existence guarantees (see again \cite{oscar}) that the discretisation preserves the geometric invariants of the phase space (symplectic leaves, leafwise symplectic structures) together with the qualitative physical properties of the system -- we will illustrate this in different situations. 

 The paper is organized as follows: 
We start with an illustration of our PHI technique for a simple Lotka-Volterra system -- we observe a better behaviour of a numerical solution in comparison with the standard Runge-Kutta method.
In section \ref{sec:poisson} we recall the minimal working knowledge from Poisson geometry to formulate precisely the definition and give the strategy of construction of the Poisson Hamiltonian integrators. Then in section \ref{sec:PHI} we introduce more advanced geometric notions that are needed for the construction which is made explicit in section \ref{sec:PHI-constr}. The last section \ref{sec:num} is devoted to numerical results of comparison of the constructed Poisson Hamiltonian integrators with standard methods for various typical situations.

\section{An introductory example} \label{sec:LVnum}

Let us look at a particular case of Lotka-Volterra type equations
\begin{equation}\label{eq:Lodka3} \nonumber
    \begin{array}{lll}
        \dot{x}_1 &= x_1(x_2 + x_3)\\
        \dot{x}_2 &= x_2(-x_1 + x_3)\\
        \dot{x}_3 &= -x_3(x_1 + x_2). 
    \end{array}
\end{equation}
This system of differential equations appears in \cite{Volterra1931} (page 97, equation (16)) as a model in population dynamics, and similar systems have been extensively studied since then. 
For this particular one, an explicit solution was computed in \cite{vanhaecke2016}: it allows to compare, for any initial point and desired time, numerical simulations with the exact solution  ${x}^{\hbox{\tiny{exact}}}(t)= (x_1(t),x_2(t),x_3(t))$. \\
Some integral curves  $ {x}^{\hbox{\tiny{exact}}}(t)$ go to infinity, exploding exponentially fast while approaching some specific time. For instance, using the exact formulas given in \cite{vanhaecke2016}, one observes that for the initial value
$$
\begin{pmatrix}
    x_1(0)\\
    x_2(0)\\
    x_3(0)
\end{pmatrix}
= 
\begin{pmatrix}
    -3\\
    5\\
    10^{-3}
\end{pmatrix},
$$
the trajectory $  {x}^{\hbox{\tiny{exact}}} (t)$ starts exploding around $T_{sing} \simeq 0.23$. We use this singularity to test two numerical methods of different nature.

The first one is the standard explicit 2nd order Runge--Kutta method (RK-2); the second one is a numerical scheme (PHI-1) where ${{x}}_{n+1}^{\hbox{\tiny{PHI}}}$ is implicitely computed from ${{x}}_{n}^{\hbox{\tiny{PHI}}}$ by the following two step procedure: 

$$
 \left\{\begin{array}{lll}
        x_{1,n+1} &= e^{(\frac{\Delta t}{2}({y_{2,n}} + {y_{3,n}}))} {y_{1,n}}  \\
        x_{2,n+1} &= e^{(\frac{\Delta t}{2}(-{y_{1,n}} + {y_{3,n}}))} {y_{2,n}}\\
        x_{3,n+1} &= e^{(\frac{-\Delta t}{2}({y_{1,n}} + {y_{2,n}}))} {y_{3,n}} 
    \end{array}
    \right.
    $$
    with the intermediate point $\mathbf{y}_n = (y_{1,n},y_{2,n}, y_{3,n})$ subject to the relation: 
$$
    \left\{\begin{array}{lll}
        e^{(\frac{-\Delta t}{2}(y_{2,n} + {y_{3,n}}))} {y_{1,n}} &= x_{1,n}\\
        e^{(\frac{\Delta t}{2}({y_{1,n}} - {y_{3,n}}))} {y_{2,n}} &= x_{2,n}\\
        e^{(\frac{\Delta t}{2}({y_{1,n}} + {y_{2,n}}))} {y_{3,n}} &= x_{3,n} 
    \end{array}
    \right.
$$

This numerical scheme is of order 1 only.
Nevertheless, it performs much better than the order two Runge-Kutta method near the singularity $T_{sing} $ described above (see Fig. \ref{fig:Lodka_dist_to_real_gprd1}):

\begin{figure}[h]
    \centering
    \includegraphics[width=0.5\textwidth]{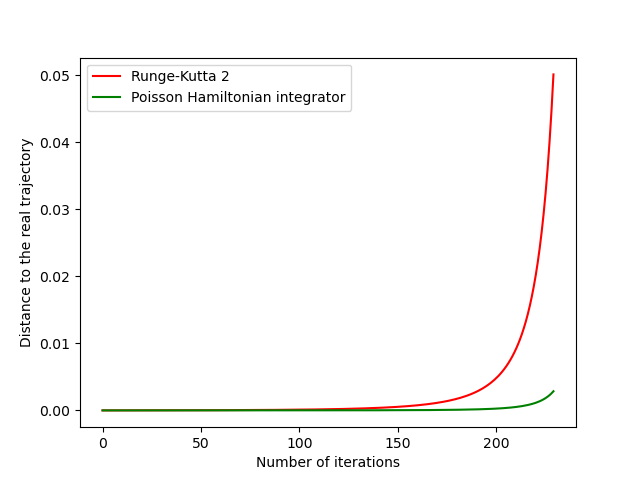}
    \caption{Comparison of the error for RK-2 and PHI-1}
    \label{fig:Lodka_dist_to_real_gprd1}
\end{figure}
 We observe that the PHI-1 method approximates the solution much better than the RK-2. In fact comparing the values of the variables, we see that the RK-2 misses the singularity completely, in a sense that it goes off the exact solution much earlier than it may tend to infinity, so using it alone one would not even notice that the solution is singular; while the PHI-1 method pushes the solution up to the last step before hitting the singularity, where it goes to what one can call ``numerical infinity''. 

It is actually quite unexpected that an order $1$ integrator behaves better than an order $2$ integrator. The same situation happens when one compares PHI-1 one with a 4th order Runge-Kutta scheme for bigger values of $T.$ The explanation is that as often in such situations when such a behavior happens, we are comparing an integrator that does not preserve the underlying geometric structure of the differential equations with an integrator that does. We will see below that the second method is a \emph{Poisson Hamiltonian integrator}, so we will understand what is preserved exactly and why this guarantees an appropriate trajectory.
The aim of the following sections is to explain what are such integrators, which properties they satisfy, and how one can construct and implement them in a wide class of examples.

\section{Geometric integrators for Poisson Hamiltonian systems.
}
\label{sec:poisson}

\subsection{Poisson structures: definition}

Let us briefly recall\footnote{A reader familiar with Poisson geometry may safely skip this section up to \ref{sec:PI}.} what are Poisson bivector fields and Hamiltonian differential equations, and why they matter.  
In mechanics,  quite some differential equations governing a motion $x(t)$ valued in an open subset $\mathcal U\subset \mathbb R^n $  take the form:

\begin{equation}\label{eq:HamiltonianDiffEquation} 
\left\{\begin{array}{rcl} 
\dot{x}_1(t)& =& \sum\limits_{j=1}^n \pi_{1j}(x(t)) \, \frac{ \displaystyle \partial H (x(t)) }{ \displaystyle \partial x_j}   \\ &\vdots & \\ \dot{x}_n (t)& =&  \sum\limits_{j=1}^n \pi_{nj}(x(t)) \, \frac{\displaystyle \partial H (x(t)) }{\displaystyle  \partial x_j}  
\end{array} \right. 
\end{equation}
where $H \colon \mathcal U\longrightarrow \mathbb R$ is a smooth function, that it is customary to call {\emph{Hamiltonian}} in this context,  and $(\pi_{ij}), \; i,j \in \{1, \dots, n\}$ is a family of smooth functions which satisfy
 \begin{equation}
 \label{eq:conditions}
 \pi_{ji}= - \pi_{ij} \; \text{  and  } \; \sum_{a=1}^n \frac{\partial \pi_{ij}}{\partial x_a}  \,  \pi_{ak} + \circlearrowleft = 0,
 \end{equation} 
 where all indices $i,j,k \in \{1, \dots, n\}$ and $\circlearrowleft$ stands for their cyclic permutations.. 
 
 Let us first explain these conditions: there is a theorem (see chapter 1 of \cite{LPV2012}) claiming that a family $(\pi_{ij}), i,j \in \{1, \dots, n\}$ of smooth functions on $\mathcal U\subset \mathbb R^n$ satisfy (\ref{eq:conditions}) if and only if the bilinear map:
  $$  
  \begin{array}{rcll} \{\cdot, \cdot\} \colon& \func{\mathcal{U}} \times \func{\mathcal{U}} &  \longrightarrow & \func{\mathcal{U}} \\ & f,g & \mapsto & \sum\limits_{i,j=1}^n \frac{\displaystyle 1}{\displaystyle 2} \pi_{ij}(x)  \left( \frac{\displaystyle\partial f}{\displaystyle\partial x_i} \frac{\displaystyle\partial g}{\displaystyle\partial x_j} - \frac{\displaystyle\partial g}{\displaystyle\partial x_i} \frac{\displaystyle\partial f}{\displaystyle\partial x_j}\right) \\ \end{array}
  $$
  is a Lie bracket, i.e. is anti-symmetric and satisfies the Jacobi identity:
  \begin{equation}
  \label{eq:Jacobi}
       \{f,g\} = -\{g,f\} \hbox{ and } \{\{f,g\},h\} + \{\{g,h\},f\}+ \{\{h,f\},g\} = 0
  \end{equation}
  for all $ f,g,h \in \func{M}$.
  Equivalently, the functions $ \pi_{ij}(x)$ above can be considered as being a \emph{tensor} (i.e. a map from $\mathcal U \subset \mathbb R^n$ to ${\mathfrak{gl}}_n(\mathbb R)$, 
  viewed as  $n \times n$  matrices depending on $x$:
  $$\begin{array}{rcll}\pi \colon &\mathcal U& \longrightarrow& {\mathfrak{gl}}_n(\mathbb R)  \\ & x&  \mapsto& (\pi_{ij}(x)_{i,j=1}^n) \end{array}$$
  
\begin{definition}
    A \emph{Poisson structure} on an open subset $ \mathcal U \subset \mathbb R^n$ is  a  tensor $\pi
    $ on $\mathcal U$ such that the bilinear map  $\func{\mathcal U} \times \func{\mathcal U} \to \func{\mathcal U}$ defined by $$\{f,g\}(x) = {}^t \nabla_x f . \pi(x) .\nabla_x g $$ satisfies the skew-symmetry and Jacobi identities (\ref{eq:Jacobi}).
    Equivalently, Poisson structures can be seen as tensors whose coefficients $(\pi_{ij}(x)), i,j \in \{1, \dots, n\} $ satisfy Equations (\ref{eq:conditions}).
    \end{definition}
The long history behind this notion comes with a vocabulary, which is sometimes confusing:
it is customary to call the bilinear map $\{\cdot, \cdot\} $ the \emph{Poisson bracket}. Also, functions in $\func{\mathcal U} $ are -- depending on the context -- sometimes all called \emph{Hamiltonian functions} or simply \emph{Hamiltonians}.
Last, the easy to check relation:
 $$ \{f,gh\}= \{f,g\} h + g \{f,h\} $$
 is called \emph{Leibniz identity}. 
For examples of Poisson structures that appear in mechanics, see Section \ref{sec:poisson-examples}.

We saw that a Poisson structure associates to \emph{two} Hamiltonian function  $f,g \in \func{\mathcal{U}}$ another Hamiltonian function $\{f,g\} $. But it also allows to associate to \emph{one} Hamiltonian function $H \in \func{\mathcal{U}}$ a first order autonomous differential equation, as in \eqref{eq:HamiltonianDiffEquation}. 
More abstractly, \eqref{eq:HamiltonianDiffEquation} means that to a Hamiltonian function $H$ we associate the (vector) differential equation: 
 \begin{equation} \label{eq:ham-inv}
      \dot{x}(t) = \pi(x(t)) \cdot \nabla_{x(t)} H. 
 \end{equation}
 
 We say that a first order autonomous differential equation of the form \eqref{eq:ham-inv} above is a  \emph{Hamiltonian differential equation for $(\pi,H) $}.

\begin{remark}In differential geometry, a first order differential equation on an open subset $U$ of $\mathbb{R}^n$ with an independent parameter having the meaning of time
\begin{equation}\label{eq:diff_eq}
    \dot{x} = F(x)
\end{equation} 
is generally referred to as a vector field on $U$. 
Moreover, rather than considering only open subsets of $\mathbb{R}^n$, the phase space is often assumed to be a differential manifold. 
\end{remark}

\subsection{The underlying geometry of a Poisson structure}

It is natural to ask why it matters that behind an autonomous first order differential equation, there is a Poisson structure and a Hamiltonian function. What does one gain by knowing   that a given differential equation is Hamiltonian of $(\pi, H)$? The classical answer is that many ``quantities'' related to the $\pi$ or $H$ are preserved under the flow of the differential equation. 

More precisely, any solution $x(t) $ (often called ``integral curve'' in mathematics) of a differential equation Hamiltonian for $(\pi,H) $ preserves several functions:
\begin{enumerate}
      \item $H$ is a constant of motion, i.e. $H(x(t)) = H(x(0))$. In words, it means that the flow of a Hamiltonian differential equation for $(\pi, H) $ preserves the level sets of the Hamiltonian function $H$.
      \item More generally, any function $G$ such that $\{G,H\}=0 $ is a constant of motion.
      \item Even more generally, for any function $G \in \func{\mathcal{U}}$, one hav
      $$
        \frac{\mathrm{d} G(x(t))}{\mathrm{d}t} = \{G,H\} (x(t)).
        $$
\end{enumerate}

Now, let us recall some properties of the time $t$ flow of a differential equation which is Hamiltonian for $(\pi,H) $, i.e. the map $ \phi_t \colon x(0) \mapsto x(t)$, which is well-defined in a neighborhood of any $m \in \mathcal U$ for $t$ small enough:  
\begin{enumerate}
      \item $\phi_t $ preserves $\pi$: $\pi(\phi_t(x)) = {}^t \nabla_x \phi_t (x). \pi(x) . \nabla_x \phi_t (x)$. In words, it means that the flow of a Hamiltonian differential equation for $(\pi,H) $ preserves the Poisson structure $\pi$.
      \item The previous condition can also be stated as meaning that the pull-back map $f \mapsto \phi_t^* f $, i.e. the map assigning to a smooth Hamiltonian function  $f$ the Hamiltonian function $f \circ \phi_t $, is a Lie algebra morphism, i.e. $$ \phi_t^* \{f,g\} =  \{\phi_t^* f,\phi_t^* g\}$$
      for all functions $ f,g$.
      \item Item 1 above means in particular that the geometry of $\pi$ is preserved. For instance if at the initial point $x(0)$, the matrix $\pi(x(0))$ has some given rank, it has this same rank at every point along the integral curve $x(t)$. Below, we will give a much stronger statement, using the notion of symplectic leaves.
\end{enumerate}

A \emph{symplectic singular foliation} on $\mathcal U\subset \mathbb R^n $ is a partition
 $\mathcal U= \sqcup_{c \in I} \mathcal{F}_c$  by submanifolds, such that each $\mathcal F_c $ is equipped with a symplectic structure $ \omega_{c}) $.
 The pair $(\mathcal F_c,\omega_c) $ is called a \emph{symplectic leaf}.

\begin{theorem}[\cite{LPV2012, daSilva1999}]
Any Poisson structure on $\mathcal U\subset \mathbb R^n $ induces a natural foliation by symplectic leaves characterized by the following two properties:
\begin{enumerate}
   \item two points belong to the same symplectic leaf if and only if they can be connected by a sequence of Hamiltonian trajectories, i.e. by integrating the equation \refeq{eq:ham-inv} for some choice of Hamiltonian functions. 
   \item for every $c \in I$, the inclusion $i \colon \mathcal{F}_c \xhookrightarrow{} M$ is a Poisson morphism.
\end{enumerate}
In addition, the tangent space of the symplectic leaf $\mathcal F_c $ at a point $m$ coincides with the image of the matrix $\pi(m) $.
\end{theorem}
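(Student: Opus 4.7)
The plan is the classical Weinstein construction. I would first define, for each $m \in \mathcal{U}$, the pointwise subspace $D_m := \mathrm{Im}\,\pi(m) \subset T_m\mathcal{U}$ of dimension $\mathrm{rk}\,\pi(m)$, and introduce the equivalence relation $m \sim m'$ meaning that $m$ and $m'$ are joined by some finite concatenation of Hamiltonian flows $\phi^{H_1}_{t_1} \circ \cdots \circ \phi^{H_k}_{t_k}$. The leaves $\mathcal{F}_c$ are taken to be the equivalence classes. Property (1) then holds by the very definition of the relation, and property (3), the identification $T_m\mathcal{F}_c = \mathrm{Im}\,\pi(m)$, follows because the velocity of any Hamiltonian trajectory through $m$ equals $X_H(m) = \pi(m)\nabla_m H$, and conversely every such vector is realized by choosing an $H$ with prescribed gradient at $m$.

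To upgrade these set-theoretic classes to genuine immersed submanifolds, I would invoke the Stefan--Sussmann theorem on integrability of singular distributions. The hypothesis is that the family $\{X_H : H \in \func{\mathcal{U}}\}$ be involutive and pointwise span $D$. Involutivity comes from the crucial identity $[X_f, X_g] = X_{\{f,g\}}$, which is a direct algebraic consequence of the Jacobi identity \eqref{eq:Jacobi}; the spanning property is the definition of $D$. This is the central obstacle of the proof: because $\mathrm{rk}\,\pi$ is generally non-constant, the classical Frobenius theorem does not apply, and one genuinely needs the singular version.

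Next I would equip each leaf $\mathcal{F}_c$ with a $2$-form by setting $\omega_c(X_f(m),X_g(m)) := \{f,g\}(m)$. Well-definedness amounts to observing that $X_f(m) = 0$ iff $\nabla_m f$ annihilates $\mathrm{Im}\,\pi(m)$, which together with the skew-symmetry of $\pi$ makes the right-hand side independent of the choice of representatives. Non-degeneracy then follows immediately, since any vector in the kernel of $\omega_c$ at $m$ would be of the form $X_f(m)$ with $\nabla_m f$ annihilating $D_m$, hence zero. Closedness reduces, via the Cartan formula applied to the Hamiltonian frame, to the identity
\begin{equation*}
\mathrm{d}\omega_c(X_f, X_g, X_h) = \{\{f,g\},h\} + \{\{g,h\},f\} + \{\{h,f\},g\},
\end{equation*}
which vanishes by Jacobi \eqref{eq:Jacobi}.

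Finally, property (2) that the inclusion $i\colon(\mathcal{F}_c,\omega_c)\hookrightarrow(\mathcal{U},\pi)$ is a Poisson morphism is essentially built into the definition of $\omega_c$: the bracket induced on $\mathcal{F}_c$ by $\omega_c$ agrees, on restrictions of ambient smooth functions, with $\{\cdot,\cdot\}_\pi$. Once singular integrability has been dealt with, every remaining verification is a purely algebraic consequence of conditions \eqref{eq:conditions} and the Jacobi identity \eqref{eq:Jacobi}; a complete write-up following this scheme is given in \cite{LPV2012, daSilva1999}.
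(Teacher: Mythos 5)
The paper states this theorem without proof, simply citing \cite{LPV2012, daSilva1999}, so there is no internal argument to measure yours against; what you propose is the classical construction, and most of it is sound as a sketch: defining the leaves as orbits of concatenated Hamiltonian flows, identifying $T_m\mathcal F_c$ with $\mathrm{Im}\,\pi(m)$, the well-definedness and non-degeneracy of $\omega_c$ via the identity $\ker\pi(m)=(\mathrm{Im}\,\pi(m))^{\circ}$, closedness via the Jacobi identity \eqref{eq:Jacobi}, and the verification that the inclusion is a Poisson morphism.

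The one step that would fail as written is the appeal to Stefan--Sussmann ``with involutivity''. For a \emph{singular} distribution, involutivity of a generating family of vector fields is not sufficient for integrability: the distribution on $\mathbb R^2$ generated by $\partial_x$ and $f(x)\partial_y$, with $f$ flat, vanishing for $x\le 0$ and positive for $x>0$, is involutive, yet its orbit through the origin is all of $\mathbb R^2$ while the distribution there is one-dimensional, so no integral manifold of the right dimension exists. The hypothesis you actually need is Sussmann's: invariance of the distribution under the flows of the generating vector fields. This is where the Poisson condition enters a second time, beyond $[X_f,X_g]=X_{\{f,g\}}$: the flow $\phi^f_t$ of $X_f$ preserves $\pi$ (item $\spadesuit$ in the paper), hence its differential maps $\mathrm{Im}\,\pi(m)$ isomorphically onto $\mathrm{Im}\,\pi(\phi^f_t(m))$; this is exactly the required invariance and also shows that the rank of $\pi$ is constant along each orbit, which you implicitly use when putting a smooth non-degenerate $2$-form on the leaf. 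Alternatively --- and this is how \cite{LPV2012} actually proceeds --- one can bypass the orbit theorem and obtain the leaves from Weinstein's splitting theorem, proved by induction on the rank of $\pi$ at a point. Either repair completes your argument; without one of them, the passage from set-theoretic equivalence classes to immersed symplectic submanifolds is not justified.
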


\begin{figure}[h]
    \centering
    \includegraphics[width=0.5\textwidth]{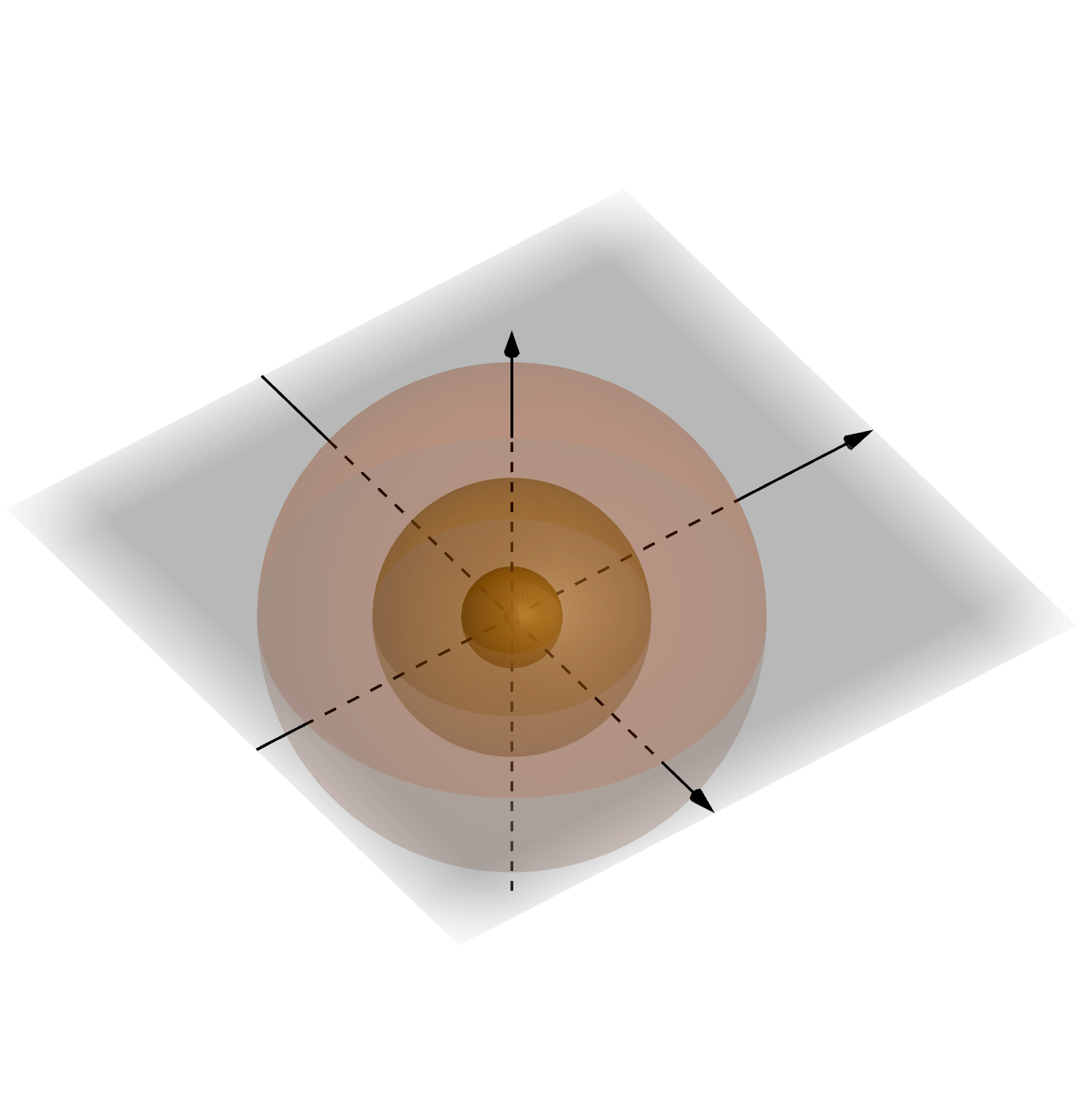}
    \caption{A foliation of $\mathbb{R}^3$ by concentric spheres and the origin.}
    \label{fig:foliation}
\end{figure}

As observed in some of the following examples, the foliation is generically singular. Two neighbouring leaves do not necessarily have the same dimension and can differ from a topological point of view. Therefore, its study is an active field of research and motivates one of the long term applications of the numerical tools we present here.

The last reason explaining the importance of knowing that a differential equation is  Hamiltonian for $(\pi,H)$ is the following: a solution $x(t)$ of a differential equation Hamiltonian for $(\pi,H)$ can not ``jump'' from one symplectic leaf to another. That is if $x(0)$ belongs to a leaf $\mathcal F_c $, then the solution $x(t)$ belongs to the same leaf for every $t$. 

\begin{remark}
This last ``constraint'' is maybe less studied for numerical methods than the previous ones, because when the Poisson is symplectic, it is not a constraint at all: the foliation contains only one leaf being the whole space.  But for non-symplectic Poisson structures this is a very important feature to take under account.
\end{remark}

In conclusion, for any Poisson structure $\pi$: 
\begin{enumerate}
    \item[$\diamondsuit$]  for any first order autonomous differential equation which is Hamiltonian for $(\pi,H)$, the Hamiltonian $H$ is constant along any integral curve;
    \item[$\heartsuit$]  each integral curve stays on the same symplectic leaf of the foliation defined by $\pi$;
    \item[$\spadesuit$] the flow of this differential equation preserves $\pi$, i.e. is a Poisson morphism; 
    \item[$\clubsuit$] the converse is not necessarily true: preservation of $\pi$ does not guarantee that the flow is Hamiltonian.
\end{enumerate}

\subsection{Examples of Hamiltonian equations -- first candidates for integrators. } \label{sec:PI}

The goal of what follows is to explain the logic behind the construction of numerical schemes that take into account the geometric features described above. We illustrate it on simple cases yet instructive examples. 

Important examples of Poisson structures are the symplectic ones in their canonical form, e.g. where $\pi = \begin{pmatrix}
0 & -I\\
I & 0
\end{pmatrix}$. For those, a wide example of symplectic integrators are already available in the literature. One construction of such integrators uses the principle of symplectic Runge--Kutta schemes (\cite{yoshida}): 
\begin{equation}
    \begin{array}{ll}
         x_{n+1} &= x_n + \Delta t \sum\limits_{i=1}^s b_i k_i\\
         k_i &= \pi(x_n + \Delta t \sum\limits_{j=1}^n a_{ij} k_j). \nabla H(x_n + \Delta t \sum\limits_{j=1}^n a_{ij} k_j)
    \end{array}
\end{equation}
where slopes $k_i$ are implicitly defined and coefficients $b_i$ and $a_{ij}$ are chosen such that the discrete flow preserves $\pi$. For this precise $\pi$, any trajectory preserving it is necessary a time-dependent Hamiltonian one, at least locally.

When $\pi$ is degenerate, the same principle can be applied (\cite{Jay2003}) and leads to a discrete flow that preserves the Poisson tensor. However, it would lead to non-physical simulations, e.g. non-Hamiltonian ones. It does not guarantee the Hamiltonian property of the discrete trajectory anymore because of the existence of outer Poisson automorphisms, as illustrated in the following example.

\begin{example} \label{poisnoham}
    Consider $\mathcal U= \mathbb{R}^3$. The system of differential equations
    \begin{equation}\label{eq:x2y2_modified} 
    \left\{\begin{array}{rcl} 
    \dot{x}&= & -\frac{(x+y-z)}{8}\left( (-x-y+z)^2 + (x + y - z)^2\right) \\ 
    \dot{y}& =& \frac{(-y+z)}{4}\left((x-y+z)^2 + (x + y - z)^2\right) \\
    \dot{z}& =& \frac{(x - y + z)}{8}\left((x - y + z)^2 + (x + y -z)^2\right)
         \end{array}\right.
    \end{equation}
    is Hamiltonian with respect to the Poisson structure
    $$\pi(x,y,z) = \frac{(x - y + z)^2 + (x + y -z)^2}{4} 
    \begin{pmatrix}
0 & -1  & -1 \\
1 & 0 & -1 \\
1 & 1 & 0
\end{pmatrix}$$ and the Hamiltonian
$H \colon (x,y,z) \mapsto \frac{(x - y + z)^2 + (x + y - z)^2}{8}.$ 
For any $\Delta t >0 $, the system of equations:

\begin{equation}\label{eq:Poisson_integrator_for_quad}
\left \{
    \begin{array}{lllll}
        x_{n+1} =& x_n \cos{\left(\Delta t \frac{x_n^2 + y_n^2 - 2 y_n z_n + z_n^2}{2}\right)} + y_n \sin{\left(\Delta t \frac{x_n^2 + y_n^2 - 2 y_n z_n + z_n^2}{2}\right)} \\
        &- z_n \sin{\left(\Delta t \frac{x_n^2 + y_n^2 - 2 y_n z_n + z_n^2}{2}\right)}  \\
        y_{n+1} =& \frac{-x_n + y_n - z_n}{2} \sin{\left(\Delta t \frac{(x_n - y_n + z_n)^2 + (x_n + y_n - z_n)^2}{4}\right)} + \frac{-x_n + y_n + z_n}{2} \exp{(\Delta{t}^k)} \\
        &+ \frac{x_n + y_n - z_n}{2}\cos{\left(\Delta{t} \frac{(x_n - y_n + z_n)^2 + (x_n + y_n - z_n)^2}{4}\right)}\\
        z_{n+1} =& \frac{-x_n + y_n + z_n}{2}\exp{(\Delta{t}^k)} + \frac{x_n - y_n + z_n}{2}\cos{\left(\Delta{t} \frac{(x_n - y_n + z_n)^2 + (x_n + y_n - z_n)^2}{4}\right)} \\
        &+ \frac{x_n + y_n - z_n}{2} \sin{\left(\Delta{t} \frac{(x_n - y_n + z_n)^2 + (x_n + y_n - z_n)^2}{4}\right)}
    \end{array}
    \right.
\end{equation}
is a discretisation of order $k$ of the differential equation \eqref{eq:x2y2_modified}.
It is routine  to check that it is a Poisson integrator, i.e. $(x_n,y_n) \mapsto  (x_{n+1},y_{n+1} )$ is a Poisson isomorphism. However, it is \emph{not} a Poisson Hamiltonian integrator.
This can be proven as follows: for any vector field on $\mathbb R^3$ that vanishes at least quadratically at $ (0,0, 0)$, the differential of its flow at $(0,0,0)$ is the identity map. In particular,  since the coefficients of the Poisson structure vanish at least quadratically at $ (0,0,0)$, so does any Hamiltonian vector field, so that any Poisson Hamiltonian integrator should be made of a local diffeomorphism whose differential at $ (0,0,0)$ is the identity map. Since the differential at $(0,0,0) $ of the map $(x_n,y_n, z_n ) \mapsto  (x_{n+1},y_{n+1}, z_{n+1} ) $ is not equal to identity map, the latter Poisson integrator is not Hamiltonian.

\begin{figure}[htp]
        \centering
        \begin{subfigure}[b]{0.475\textwidth}
            \centering
            \includegraphics[width=\textwidth]{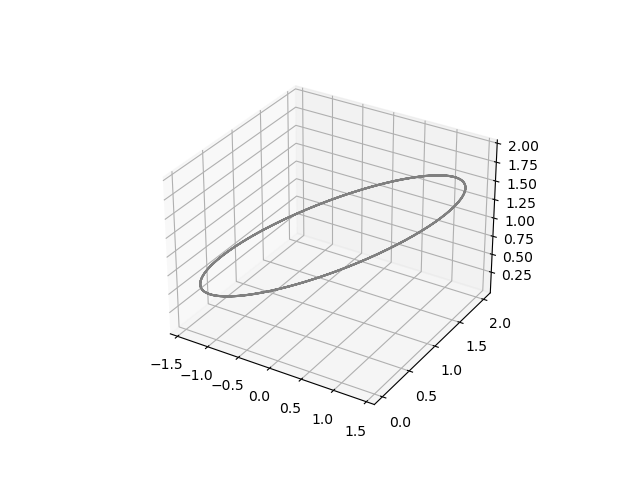}
            \caption{Flow of \eqref{eq:x2y2_modified}}
        \end{subfigure}
        \hfill
        \begin{subfigure}[b]{0.475\textwidth}  
            \centering 
            \includegraphics[width=\textwidth]{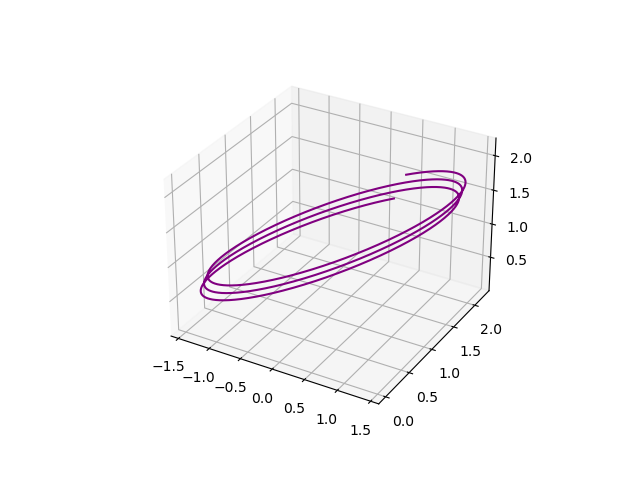}
            \caption{Poisson integrator \eqref{eq:Poisson_integrator_for_quad}}
        \end{subfigure}
        \caption{Discrepancy created by the Poisson integrator \eqref{eq:Poisson_integrator_for_quad}
        }
        \label{fig:comparison_flow_integrator}
\end{figure}

Figures \ref{fig:comparison_flow_integrator}.A \ref{fig:comparison_flow_integrator}.B show the difference between the actual flow of \eqref{eq:x2y2_modified} and the first iterations of the Poisson integrator \eqref{eq:Poisson_integrator_for_quad} at order $k = 2$ with initial points $\begin{pmatrix}
    1\\
    1\\
    2
\end{pmatrix}$ and timestep $\Delta t = 10^{-4}.$ The flow should be $4\pi$-periodic while an approximation of it at order 2 destroys the topology of the curve, even while it preserves the Poisson tensor. The geometric reason is that the Poisson integrator
\eqref{eq:Poisson_integrator_for_quad}
does not stay on a symplectic leaf of the Poisson structure, i.e. a hyperplane of equation $\{x-y+z = \text{constant} \}.$
\end{example}

\subsection{Poisson Hamiltonian integrators.}

As in classical numerical analysis, we call \emph{integrator} of order $k$
for a differential equation 
 $$ \dot{x}(t) = F(x(t))$$
a family of diffeomorphisms\footnote{There is a subtle point here: we cannot assume $\phi_h $ to be a well-defined diffeomorphism from $\mathcal U  $  to $\mathcal U $ for all $h$ small enough, but we can assume that for all $h $ small enough, there is an open subset $\mathcal U_h \subset \mathcal U $ on which $\phi_h $ is a diffeomorphism onto its image.
}  
$$\phi_h \colon \mathcal U  \longrightarrow \mathcal U,$$
    depending smoothly on a real parameter  $h $ such that the exact solution of the differential equation  coincides
    with  $\phi_h(x)$ up to order $k$ in $h$, i.e. $ \| \phi_h(x)- x(h) \| = \smallO{h^k}$. 
The \emph{numerical scheme of timestep $\Delta t$} associated to an integrator consists in
the recursive sequence $$x_0 = x \hbox{ and } x_{n+1} = \phi_{\Delta t}(x_n). $$

\begin{remark}
    Since a numerical scheme is defined by its iterations, the words \textit{integrator}, \textit{numerical method} and \textit{numerical scheme} can be understood without ambiguity as synonyms all along this article.
\end{remark}

Consider now a differential equation \eqref{eq:ham-inv} on $ \mathcal U \subset \mathbb R^n$ which is Hamiltonian for a Poisson structure $\pi $ and a Hamiltonian $H \in C^\infty(\mathcal U)$.

\begin{definition}

An integrator of order $k$ 
for the Hamiltonian differential equation \eqref{eq:ham-inv} 
$$\phi_h \colon \mathcal U  \longrightarrow \mathcal U,$$ is 
 said to be a \emph{Poisson integrator} if
        $\phi_h $ is a Poisson diffeomorphism of $(\mathcal U,\pi) $ for all $h$ for which it is defined\footnote{Again, it is more rigorous to say $\phi_h \colon \mathcal U_h \to \phi_h(\mathcal U_h) $ is a Poisson diffeomorphism}. 
\end{definition}

As explained in the Example \ref{poisnoham}, Poisson integrators can have a flow: the trajectories may jump from one symplectic leaf to another, and thus have non-physical behaviour. Hence, we formulate the following definition. 

\begin{definition}
\label{def:HamPoissonInt}
A Poisson integrator of order $k$
for \eqref{eq:ham-inv} 
$$\phi_h \colon \mathcal U  \longrightarrow \mathcal U,$$is 
 said to be a  \emph{Poisson Hamiltonian integrator of order $k$ for \eqref{eq:HamiltonianDiffEquation}} if 
  there exists a time-dependent Hamiltonian function $(H_t)_t$, depending smoothly on $t$, that coincides with $H$ up to order $k$, i.e. $H_t-H = \smallO{h^k}$
 whose integral curve coincides with the curve $h \longrightarrow \phi_h(x)$.
\end{definition}
\noindent

The following proposition claims that this second definition is strictly stronger.

\begin{proposition}
A Poisson Hamiltonian integrator is a Poisson integrator. 
\end{proposition}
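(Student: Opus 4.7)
The plan is to unwrap the hypothesis and reduce the proposition to the classical fact that the flow of a (time-dependent) Hamiltonian vector field preserves the Poisson structure. By Definition \ref{def:HamPoissonInt}, there is a smoothly time-dependent Hamiltonian $(H_t)_t$ such that the curve $h \mapsto \phi_h(x)$ coincides with the integral curve of the non-autonomous Hamiltonian vector field $X_{H_t}(y) = \pi(y) \cdot \nabla_y H_t$ starting at $x$. In other words, $\phi_h$ is precisely the time-$h$ flow of the non-autonomous Hamiltonian system associated with $(\pi, H_t)$. So the task is reduced to: the time-$h$ flow of $X_{H_t}$ is a Poisson diffeomorphism of $(\mathcal U, \pi)$.

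Next I would establish that $X_{H_t}$ is a \emph{Poisson vector field} for every fixed $t$, i.e. acts by derivation on $\{\cdot,\cdot\}$. Concretely, for any $f,g \in \func{\mathcal U}$,
\[
X_{H_t}(\{f,g\}) - \{X_{H_t}(f),g\} - \{f,X_{H_t}(g)\} = \{H_t,\{f,g\}\} - \{\{H_t,f\},g\} - \{f,\{H_t,g\}\},
\]
and the right-hand side vanishes by the Jacobi identity \eqref{eq:Jacobi}. This is exactly the infinitesimal form of the preservation of $\pi$: differentiating the quantity $A_t(f,g) := \phi_t^*\{f,g\} - \{\phi_t^* f, \phi_t^* g\}$ with respect to $t$ gives $A_t$ itself up to terms that vanish by the derivation property above (one has $\frac{d}{dt}\phi_t^* = \phi_t^* \circ X_{H_t}$, and a direct computation shows $\frac{d}{dt} A_t(f,g) = 0$). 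Since $A_0 \equiv 0$, we conclude $A_h \equiv 0$ for all admissible $h$, i.e. $\phi_h^*\{f,g\} = \{\phi_h^* f, \phi_h^* g\}$. This is the defining property of a Poisson diffeomorphism, so $\phi_h$ is a Poisson integrator.

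I expect no serious obstacle, since the Jacobi identity is exactly what is needed and it is assumed by hypothesis on $\pi$. The only mild subtlety is handling the time-dependence of $H_t$ cleanly; I would either argue by freezing $t$ and invoking the above derivation identity pointwise along the integral curve, or equivalently pass to the suspended Poisson structure on $\mathcal U \times \mathbb R$ (with $t$ as an additional variable Poisson-commuting with $\func{\mathcal U}$) where $H_t$ becomes a genuine autonomous Hamiltonian. Both routes yield the same conclusion, and neither requires anything beyond the basic formalism recalled in Section \ref{sec:poisson}.
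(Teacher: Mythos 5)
Your argument is correct and follows essentially the same route as the paper: unwrap Definition \ref{def:HamPoissonInt} and reduce to the classical fact that the flow of a time-dependent Hamiltonian vector field is a Poisson diffeomorphism. The difference is one of detail rather than strategy: the paper disposes of this fact in a single sentence, whereas you actually prove it, by checking that $X_{H_t}$ is a derivation of the bracket (the Jacobi identity \eqref{eq:Jacobi}) and then integrating the defect $A_t(f,g)=\phi_t^*\{f,g\}-\{\phi_t^*f,\phi_t^*g\}$ from $A_0=0$; both of your proposed ways of handling the time-dependence (freezing $t$, or suspending on $\mathcal U\times\mathbb R$) are standard and sound. The one item present in the paper's proof but absent from yours is its second sentence: since $H_t$ and $H$ coincide up to order $k$, so do their flows, which is what guarantees that $\phi_h$ is an \emph{integrator of order $k$} for \eqref{eq:ham-inv} and not merely a family of Poisson diffeomorphisms. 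As Definition \ref{def:HamPoissonInt} formally presupposes that $\phi_h$ is already a Poisson integrator, this omission is not a logical gap in your write-up; but if the proposition is read (as the paper's proof suggests) as saying that the existence of the family $(H_t)$ by itself forces the Poisson integrator property, then the accuracy estimate is the half you would still need to supply.
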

\begin{proof}
The flow of a time dependent Hamiltonian differential equation is a Poisson diffeomorphism, as long as it is well-defined.
Also, if $H_t $ and $H$ coincide up to order $k$, then so do their Hamiltonian flows.
\end{proof}

\begin{remark}
\normalfont
As mentioned in the introduction, in general, we will not need in Definition \ref{def:HamPoissonInt} to describe explicitely the family $(H_t) $. All what matters at this point is that it exists.
\end{remark}

\section{Bi-realisations for Poisson manifolds}
\label{sec:PHI}

Having set the preliminaries and the framework in the previous sections, we are now ready to address the core of the paper: construct Poisson Hamiltonian integrators for a wide class of Poisson structures and any Hamiltonian differential equation on them. One of the important "tool" for the procedure is the notion of local symplectic groupoid associated to a Poisson structure that arose in \cite{Weinstein1987}. See \cite{Crainic2021} for a modern introduction to the matter. Notice that we will mostly not need the whole groupoid structure but a neighborhood of the identity of the latter, which can be considered to be closer from  the version of Karasev \cite{Karasev1987}. Several authors \cite{Ge1990}, \cite{deLeon2017} (to cite a few) have already used studied symplectic groupoids\footnote{A Lie groupoid over a manifold, roughly speaking, is a ``higher'' analog of a Lie group, where to each element one associate two mappings to this base manifold: source $\alpha$ and target $\beta$. Then two elements are composable when the source of one matches the target of the other, and for those the standard group axioms are satisfied. The symplectic form is compatible with this composition. Details are for example in \cite{daSilva1999}.} 
to construct numerical integrators: the relation is explained in \cite{oscar}.

Symplectic groupoids of Poisson manifolds are neither easy to understand as a notion, nor easy to construct as an object. Although we also somehow follow the same path, our method does not use the Lie groupoid structures (product, inverse) but only the source and target, so that all we need is what we call a \emph{bi-realisation}, so we will not have to define the notion in full generality.
 
 In the first subsections of what follows, we explain under which circumstances this bi-realisation, whose existence is guaranteed by theoretical arguments, is explicitly constructable. Then, we explain why the graph of $\epsilon dH $ gives a decent Poisson integrator for the differential equation \eqref{eq:ham-inv}.
 It is moreover possible to get a better Poisson integrator at an arbitrary order by replacing $H$ by a polynomial in $\epsilon$ whose terms are computed by an easy recursion, solving Hamilton-Jacobi equation at the desired order. Details are developed at the beginning of section \ref{sec:PHI-constr}. The modified Hamiltonian is also computed by recursion.  

While the justification of existence and estimates are guaranteed by complicated mathematical theorems, for implementation this section is sufficient and it does not assume advanced differential geometry knowledge; we again orient an interested reader to \cite{oscar} for details. So throughout the presentation in this section we will systematically make remarks on what is computable and with what precision.

\subsection{Bi-realisation I: Definition and existence.}

Assume we are given a \emph{bi-surjection}, i.e. the following data:
\begin{enumerate}
   \item an open subset $\mathcal U \subset \mathbb R^n $ -- the \emph{phase space}
    \item a subset $ \mathcal W \subset  \mathcal  U \times \mathbb R^n $ containing $ \mathcal U \times \{0\}  $, \\
    \item two surjective submersions, called \emph{source} and \emph{target}, $  \alpha, \beta \colon \mathcal W \to \mathcal U$ such that
     for all $x \in \mathcal U $
     $$ \alpha(x,0) = \beta(x,0) = x .$$ 
    
\end{enumerate}
We denote a bi-surjection by $\mathcal W \toto \mathcal U $.

Bi-surjections allow to recover a diffeomorphism of $\mathcal U $  out of any
\emph{bi-section}, i.e. any submanifold $\Sigma $ of dimension $n$ in $\mathcal W $ to which
the restrictions of the source $\alpha$ and the target $\beta$ are diffeomorphisms onto $\mathcal U$. (Sometimes, we only assume that $\alpha$ and $\beta$  are diffeomorphisms onto their images, which are open subsets of $\mathcal U$.) A bi-section $ \Sigma$ of a bi-surjection $\mathcal W \toto \mathcal U   $ induces a diffeomorphism $\underline{\Sigma} \colon \mathcal U \longrightarrow \mathcal U$ defined as $\beta \circ (\alpha_{|_{\Sigma}})^{-1} $:
   $$  \xymatrix{ & \mathcal W \ar[ddl]_\alpha \ar[ddr]^\beta & \\ &\ar@{^{(}->}[u] \Sigma \ar[dl]^{\simeq} \ar[dr]_{\simeq} & \\ \mathcal U \ar[rr]^{\underline{\Sigma}} & & \mathcal U }  $$
The crucial remark is that, if $ \Sigma$ and 
$\mathcal W \toto \mathcal U   $ are explicitly known, then the computation $\underline{\Sigma}  $ only requires to invert a diffeomorphism. This operation, in general, can be done numerically with machine precision and with reasonable cost, so that the diffeomorphism $ \underline{\Sigma}$ can be easily computed. 
The discretisations that we are going to construct
are families $(\underline{\Sigma}_h) $ of diffeomorphisms, depending on a ``small'' real parameter $h$, associated to a family $\Sigma_h $ of bi-sections such that $\Sigma_0 =  \mathcal U \times \{0\} $, so that $ \underline{\Sigma}_0$ is the identity map. 

Then $\mathcal W \subset \mathbb R^n \times \mathbb R^n$ comes equipped with a symplectic structure: 
 \begin{equation} \label{eq:omegaCan} \omega_{can} := \sum_{i=1}^n dp_i \wedge dx_i  
 \end{equation}
 with $x_1, \dots, x_n, p_1, \dots, p_n$ being the natural variables on $ \mathcal U \times \mathbb R^n $, labeled in that order. The corresponding Poisson structure satisfies:
  $$\{x_i,x_j\}_{\omega} = \{p_i,p_j\}_{\omega} =0 \hbox{ and } \{x_i,p_j\}_{\omega}  =\delta_i^j .$$
We can now state the main definition: 

\begin{definition}[Bi-realisation]
\label{def:bireal}
Let $\pi$ be a Poisson structure on an open subset  $\mathcal U  \subset \mathbb{R}^n$. 
A bi-realisation of $(\mathcal U,\pi) $
 is a bi-surjection $\mathcal W \toto \mathcal U $, with source $\alpha $ and target $\beta $, for which the auxiliary dimension coincides with the dimension of $\mathcal U $,  
satisfying the following:
 \begin{enumerate}
     \item $\alpha $ is a Poisson map,
     \item $\beta $ is an anti-Poisson map,
     \item the fibers of $\alpha$ and $\beta$ are symplectically orthogonal to each other.
 \end{enumerate}
In all three items above,
$\mathcal{W}$ comes equipped with the Poisson bracket $\{\cdot ,\cdot \}_{\omega} $ associated to the symplectic structure \eqref{eq:omegaCan}. 
\end{definition}

\begin{remark}
\normalfont
Conditions 1 -- 3 in Definition \ref{def:bireal}
mean that for all functions $F,G \in C^\infty(\mathcal U)$:
 $$  \alpha^* \{F,G\}_\pi = \{\alpha^*F,\alpha^* G\}_\omega, \quad \beta^* \{F,G\}_\pi = -\{\beta^*F,\beta^* G\}_\omega,
 $$ $$\{\alpha^*F,\beta^* G\}_\omega=0.$$
\end{remark}

We will quote the following two results (without proof), for completness and future references.

\begin{proposition} \label{prop:bi-real-poisson}
Let $\mathcal W \toto \mathcal U $ be a bi-realisation. For any bi-section $\Sigma  \subset \mathcal W$ which is Lagrangian with respect to \eqref{eq:omegaCan}, the induced diffeomorphism $\underline{\Sigma}\colon \mathcal U \longrightarrow \mathcal U$ is a Poisson diffeomorphism. \\
Moreover, for any Lagrangian submanifold of the form
 $$ \{(x_1, \ldots, x_n, \partial_{x_1}F, \ldots, \partial_{x_n}F), (x_1, \ldots, x_n) \in \mathcal{U}\} \subset \mathcal{W} $$
 for $F \in C^\infty(\mathcal U)$ a small enough\footnote{More precisely, the $\infty$-norm of the derivative at a point must be smaller than some local bound.} smooth function, this Poisson diffeomorphism is the value at time $1$ of the flow of a time-dependent Hamiltonian vector field.   
\end{proposition}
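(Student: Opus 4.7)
For the first claim, the strategy is to exploit that $\Sigma$ being Lagrangian forces the Hamiltonian vector field of any function vanishing on $\Sigma$ to be tangent to $\Sigma$. Given $f,g \in \func{\mathcal{U}}$, I introduce on $\mathcal{W}$ the auxiliary functions $\phi_f := \beta^* f - \alpha^*(\underline{\Sigma}^* f)$ and $\phi_g := \beta^* g - \alpha^*(\underline{\Sigma}^* g)$. The very definition $\underline{\Sigma} = \beta \circ (\alpha|_{\Sigma})^{-1}$ gives $\beta|_\Sigma = \underline{\Sigma} \circ \alpha|_\Sigma$, so $\phi_f|_\Sigma \equiv 0$ and likewise for $\phi_g$. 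Since $\Sigma$ is Lagrangian, $d\phi_f|_{T\Sigma}=0$ forces $X_{\phi_f}|_\Sigma \in (T\Sigma)^{\omega\perp} = T\Sigma$, and consequently $\{\phi_f,\phi_g\}_\omega$ vanishes on $\Sigma$. Expanding this bracket bilinearly and invoking the three axioms of Definition~\ref{def:bireal} in turn---the cross-terms $\{\alpha^*\cdot,\beta^*\cdot\}_\omega$ vanish by orthogonality, the pure $\beta^*$-piece contributes $-\beta^*\{f,g\}_\pi$ by the anti-Poisson property, and the pure $\alpha^*$-piece contributes $\alpha^*\{\underline{\Sigma}^* f,\underline{\Sigma}^* g\}_\pi$ by the Poisson property---one obtains $\alpha^*\{\underline{\Sigma}^* f,\underline{\Sigma}^* g\}_\pi = \beta^*\{f,g\}_\pi$ on $\Sigma$. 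Using once more $\beta|_\Sigma = \underline{\Sigma}\circ\alpha|_\Sigma$ and that $\alpha|_\Sigma$ is a diffeomorphism, this rearranges to the Poisson morphism identity $\underline{\Sigma}^*\{f,g\}_\pi = \{\underline{\Sigma}^* f,\underline{\Sigma}^* g\}_\pi$.

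For the second claim, I would connect $\Sigma$ to the zero-section by the smooth family of Lagrangian submanifolds $\Sigma_t := \{(x, t\,\partial_x F):x\in\mathcal{U}\}$ for $t \in [0,1]$. The smallness hypothesis on $F$ ensures, via the implicit function theorem, that $\alpha|_{\Sigma_t}$ and $\beta|_{\Sigma_t}$ remain diffeomorphisms throughout $[0,1]$, so each $\Sigma_t$ is a genuine bi-section. The first claim then makes $\psi_t := \underline{\Sigma}_t$ into a smooth isotopy of Poisson diffeomorphisms with $\psi_0 = \mathrm{id}$ and $\psi_1 = \underline{\Sigma}$, whose generator $Y_t := \dot\psi_t \circ \psi_t^{-1}$ is a priori only a time-dependent Poisson vector field on $\mathcal{U}$.

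The remaining and crucial step is to upgrade $Y_t$ from Poisson to Hamiltonian. Here I would exploit that the family $(\Sigma_t)$ inside $(\mathcal{W},\omega_{\mathrm{can}})$ is itself Hamiltonian-generated: writing $F$ also for its pullback to $\mathcal{W}$ along the first projection, one checks $\iota_{(0,\partial_x F)}\omega_{\mathrm{can}} = dF$, so $(0,\partial_x F)$ is the Hamiltonian vector field of $F$, and its time-$t$ flow $(x,p)\mapsto (x, p+t\partial_x F)$ sends $\Sigma_0$ onto $\Sigma_t$. Differentiating $\psi_t(x)=\beta(s_t(x))$ with $s_t(x):=(\alpha|_{\Sigma_t})^{-1}(x)$, decomposing $\dot s_t(x)$ along the Lagrangian splitting $T_{s_t(x)}\mathcal{W} = T_{s_t(x)}\Sigma_t \oplus \ker(d\alpha|_{s_t(x)})$ afforded by the bi-section property, and pushing the $\ker d\alpha$-component through $d\beta$, the symplectic orthogonality of $\alpha$- and $\beta$-fibres together with the (anti-)Poisson properties identify the output on $\mathcal{U}$ as $\pi(\cdot)\nabla H_t$ for a well-defined smooth $H_t \in \func{\mathcal{U}}$. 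The main obstacle is precisely this descent, essentially a Hamilton--Jacobi statement: the $\mathcal{W}$-Hamiltonian $F$ is not of the form $\alpha^*(\cdot)$, and it is the exactness of the 1-form $d(tF)$ that ensures the induced isotopy on $\mathcal{U}$ is exact Poisson rather than merely Poisson; the explicit recursive construction of $H_t$, unnecessary for existence but useful for implementation, is carried out in \cite{oscar}.
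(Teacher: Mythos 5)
The paper itself does not prove this proposition --- it is explicitly ``quoted without proof'' from \cite{oscar} --- so your attempt can only be measured against the sketch that resurfaces later in the proof of Theorem~\ref{theo:explicit}. Your proof of the first claim is correct and complete, and it is the standard argument: the functions $\phi_f=\beta^*f-\alpha^*(\underline{\Sigma}^*f)$ vanish on $\Sigma$, hence their Hamiltonian vector fields along $\Sigma$ lie in $(T\Sigma)^{\omega\perp}=T\Sigma$, hence $\{\phi_f,\phi_g\}_\omega|_\Sigma=0$, and expanding with the three axioms of Definition~\ref{def:bireal} yields the morphism identity. Nothing to add there.

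The second claim, however, has a genuine gap exactly where you say ``the (anti-)Poisson properties identify the output on $\mathcal U$ as $\pi(\cdot)\nabla H_t$ for a well-defined smooth $H_t$.'' Up to that point you have only established that $Y_t=\dot\psi_t\circ\psi_t^{-1}$ is a time-dependent \emph{Poisson} vector field, and the paper's own Example~2.6 (the outer automorphism of the quadratic structure on $\mathbb R^3$) is there precisely to show that a Poisson isotopy starting at the identity need not be Hamiltonian; so the upgrade cannot be waved through, it is the whole content of the statement. What is missing is the explicit candidate and its verification: since $\alpha\circ s_t=\mathrm{id}$ for all $t$, the velocity $\dot s_t(x)$ lies entirely in $\ker d\alpha$, which by axiom~3 is the $\omega$-orthogonal of $\ker d\beta$; writing $\dot s_t(x)$ as the value along $\Sigma_t$ of the Hamiltonian vector field of $\tau^*F$ minus its $T\Sigma_t$-component, one checks that $d\beta(\dot s_t(x))=\pi^{\#}\bigl(d H_t\bigr)$ at $\psi_t(x)$ with $H_t(x)=F\bigl(\tau\circ(\alpha|_{\Sigma_t})^{-1}(x)\bigr)$ --- this is the formula $\tilde S_t(x)=\partial_t S_t(y)$ that the paper records in the proof of Theorem~\ref{theo:explicit}, specialized to $S_t=tF$. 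That computation (a Hamilton--Jacobi identity using all three axioms of Definition~\ref{def:bireal}) is short but it is the decisive step, and deferring it to \cite{oscar} leaves the proposal incomplete as a proof. Your reduction to it --- the interpolation $\Sigma_t=Gr(t\,dF)$, the smallness hypothesis keeping each $\Sigma_t$ a bisection, and the observation that the family is generated by the fibrewise-constant Hamiltonian $\tau^*F$ on $(\mathcal W,\omega_{can})$ --- is the right setup and matches the intended argument.
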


\begin{theorem}[Existence and uniqueness]
\label{theo:fromtheSky}
Any Poisson structure on an open subset $\mathcal  U \subset \mathbb R^n$  admits a bi-realisation. Furthermore, it is canonical in the following sense: two different bi-realisations above a Poisson structure are symplectomorphic through a unique symplectomorphism fixing  $\mathcal  U \times \{0\}.$
\end{theorem}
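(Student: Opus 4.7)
The plan is to prove existence and uniqueness separately, both working locally in a neighborhood of the zero section $\mathcal U \times \{0\}$.

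For existence, my approach is to realise $\mathcal W$ as an open subset of $T^*\mathcal U \cong \mathcal U \times \mathbb R^n$ equipped with the canonical symplectic form $\omega_{can}$, and to construct the source and target through a generating function $S_h(x,\bar p)$ depending on a real parameter $h$, with $S_0(x,\bar p)= x \cdot \bar p$. A short computation at the level of differentials shows that any candidate $(\alpha,\beta)$ with $\alpha(x,0)=\beta(x,0)=x$ must satisfy $\beta(x,p)-\alpha(x,p)= h\,\pi(x)p + O(h^2)$ in order to reproduce $\pi$ as the obstruction to $\alpha,\beta$ coinciding; together with an appropriate symmetric correction this already ensures the Poisson, anti-Poisson and orthogonality conditions hold to first order. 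Imposing the conditions to all orders translates into a Hamilton--Jacobi equation for $S_h$ which can be solved order by order in $h$, with the Jacobi identity on $\pi$ serving precisely as the integrability condition guaranteeing that the recursion closes. Equivalently and more cleanly, one may appeal to the classical local integration of the cotangent Lie algebroid $(T^*\mathcal U, [\cdot,\cdot]_\pi)$ into a local symplectic groupoid, due to Karasev and Weinstein \cite{Weinstein1987, Karasev1987, Crainic2021}, and simply extract the source and target from the resulting groupoid structure.

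For uniqueness, consider two bi-realisations $(\mathcal W_i,\alpha_i,\beta_i)$ for $i=1,2$. The orthogonality axiom forces both $\ker d\alpha_i$ and $\ker d\beta_i$ to be Lagrangian distributions of rank $n$ in $T\mathcal W_i$; by a dimension count plus transversality along the zero section they are complementary. These two distributions, together with the common normalisation $\alpha_i|_{\mathcal U\times\{0\}}=\beta_i|_{\mathcal U\times\{0\}}=\mathrm{id}$, pick out an adapted local coordinate system in which the canonical symplectic form admits a universal Darboux-type expression determined by $\pi$ alone. Matching these charts produces a symplectomorphism $\Phi\colon \mathcal W_1 \to \mathcal W_2$ that fixes the zero section and intertwines source and target. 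Uniqueness of $\Phi$ then follows immediately: any such map must send $\alpha_1$-fibers to $\alpha_2$-fibers and $\beta_1$-fibers to $\beta_2$-fibers, so its value at an arbitrary point is forced by its restriction to $\mathcal U\times\{0\}$.

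The main obstacle is the existence step, specifically solving the Hamilton--Jacobi recursion to all orders while preserving the symplectic orthogonality of fibers at each stage. This is precisely the technical heart of the Karasev--Weinstein local integrability theorem, so the cleanest strategy is to delegate the existence of $(\mathcal W, \alpha, \beta)$ to that result, at the cost of invoking a nontrivial black box; a fully self-contained proof would require unrolling the groupoid construction, which is beyond the scope of this sketch but is carried out in detail in \cite{oscar}.
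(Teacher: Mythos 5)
The paper itself states this theorem without proof (``We will quote the following two results (without proof)''), and only substantiates the existence half afterwards via Karasev's construction and the Poisson-spray/Moser argument. Your existence step---delegating to the Karasev--Weinstein local integration of the cotangent algebroid and extracting source and target---is therefore consistent with what the paper actually does; the preceding generating-function/Hamilton--Jacobi sketch is too vague to stand alone, but you concede as much.

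The uniqueness half contains a genuine error. You assert that the orthogonality axiom forces $\ker d\alpha$ and $\ker d\beta$ to be Lagrangian. It does not: each kernel has dimension $n$ and they are symplectic orthogonal complements \emph{of each other}, $(\ker d\alpha)^{\perp_\omega}=\ker d\beta$, so an $\alpha$-fiber is Lagrangian only if $\ker d\alpha=\ker d\beta$, which happens precisely when $\pi=0$. In the paper's symplectic example, with $\alpha(q,p,\xi_q,\xi_p)=\left(q-\tfrac{\xi_p}{2},\,p+\tfrac{\xi_q}{2}\right)$, the $\alpha$-fibers are in fact symplectic submanifolds. The ``two transverse Lagrangian foliations give a canonical Darboux chart'' mechanism you invoke is therefore unavailable. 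The concluding step is also insufficient even after repair: an $\alpha$-fiber and a $\beta$-fiber through a common point intersect in a set whose tangent space is $\ker d\alpha\cap(\ker d\alpha)^{\perp_\omega}$, of dimension $n-\operatorname{rank}\pi$, hence positive-dimensional whenever $\pi$ is degenerate; so ``sends $\alpha_1$-fibers to $\alpha_2$-fibers and $\beta_1$-fibers to $\beta_2$-fibers'' does not determine $\Phi$ pointwise. The standard route is dynamical: the Hamiltonian vector fields $X_{\alpha^*f}$, $f\in C^\infty(\mathcal U)$, are tangent to the $\beta$-fibers, span $(\ker d\alpha)^{\perp_\omega}$ at every point, and are prescribed by the bi-realisation axioms alone; every point of a sufficiently small $\mathcal W$ is reached from the zero section by composing finitely many of their flows, and an isomorphism of bi-realisations must intertwine these flows while fixing $\mathcal U\times\{0\}$, which both constructs $\Phi$ and proves it unique.
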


\subsection{Bi-realisations II: Explicit constructions.}

Theorem \ref{theo:fromtheSky}
states that bi-realisations do exist and are unique. Below we explain how one can construct them.

\subsubsection*{Cotangent paths}

Let $(M,\pi) $ be a Poisson manifold\footnote{By this we mean a collection of open sets $\mathcal U$ equipped with Poisson bivectors in a consistent way. The result being essentially local, one may think of just one open set as before, and in the applications we will work in one coordinate chart anyway.}. Out of a path $\alpha $ valued in $T^*M $, two paths valued in $TM$ can be constructed:
\begin{enumerate}
    \item consider $\dot{\gamma}(t) $ with $\gamma = \tau \circ \alpha $ the base path of $\alpha $, $\tau$ being the projection defining $T^*M$.
    \item  $\alpha(t) \in T^*_{\gamma(t)}M$ with  $\pi_{\gamma(t)} \in \wedge^2 T_{\gamma(t)} M$: the contraction $\pi^\#(\alpha) $ is a path valued in $TM$.
\end{enumerate}
We call a path \emph{cotangent} when both $TM$-valued paths above coincide.  

When an affine connection $\nabla $  is given on $T^*M $, every $\xi \in T^*M  $ is a starting point of a parallel cotangent path  $t \mapsto \xi(t) $ required to satisfy the additional condition: 
 $$  \nabla_{\dot{\gamma}(t)}  \xi = 0 .$$
There exists a neighborhood\footnote{For purpose of notation, we denote it $\mathcal W $ again, even though it is a collection of such open sets described before.} $\mathcal W $ of $M$ in $ T^* M$ for which the parallel cotangent path above is defined for all $ t \in [-1,1]$. We call \emph{geodesic flow} the map:
  $$ {\Xi}_{\nabla,\pi}: \xi \in \mathcal W \to \xi(t) \in \{ \hbox{Cotangent Paths} \} .$$

\subsubsection*{Karasev's construction}

Let $(M,\pi) $ be a Poisson structure with $M$ a subset of $ \mathbb R^n$,
so that $T^*M  $ can be identified with pairs $ m \in M $ and $\xi \in \mathbb R^n $. 
Let $\nabla $ be the trivial affine connection.
For every $\xi \in \mathbb R^n $, $  {\Xi}_{\nabla,pi} (m,\xi) $ defines a path $m(t)$ solution of the differential equation:
  $$ \dot{m_\xi}(t) = \pi_{m_\xi(t)}^\# \xi .$$
For every $\xi $, consider the open subsets $M^\alpha_\xi \subset M $ (resp. $M^\beta_\xi \subset M$) on which this path $m_\xi$ is well defined for all $t \in [0,1] $ (resp. all $t \in [-1,0] $).

The idea of Karasev consists in looking at the following two equations whose unknown $\alpha,\beta$ are in $M$, for a given $(m,\xi) \in T^* M$: 
$$ \int_{0}^1 \Xi ( \alpha , \xi ) (t) dt = m \hbox{ and } \int_{-1}^0 \Xi ( \beta , \xi ) (t) dt = m. $$
Since, for $\xi=0 $, the unique solutions are $ \alpha=\beta=m$, 
there exists a neighborhood $\mathcal{W}$ of $M$ in $T^* M $ on which the two previous equation have a unique solution, defining therefore two maps $\mathcal{W}\to M $ that we denote $\alpha $ and $\beta $. 

\begin{proposition}[Karasev]
The triple $(\mathcal{W},\alpha,\beta)$ is a bi-realisation for a Poisson structure $ (M,\pi)$.
\end{proposition}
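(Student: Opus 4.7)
The plan is to verify in two stages: first that $\alpha$ and $\beta$ are smooth surjective submersions with $\alpha(m,0)=\beta(m,0)=m$, and then the three defining conditions of Definition~\ref{def:bireal}.

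For the first stage, I would apply the implicit function theorem to the two defining integral equations. At $\xi=0$ the vector field $\pi^\#\xi$ vanishes identically, so the cotangent paths are constant and both equations collapse to the trivial identity. The derivatives of the left-hand sides with respect to $\alpha$ and $\beta$ at $\xi=0$ are the identity on $M$, so the implicit function theorem produces unique smooth solutions $\alpha,\beta:\mathcal W\to M$ on some neighborhood $\mathcal W$ of the zero section, with $\alpha(m,0)=\beta(m,0)=m$; the submersion property then follows from smoothness together with the identity-on-the-zero-section property. A useful by-product is the identity $\beta(m,\xi)=\Phi^1_\xi(\alpha(m,\xi))$, where $\Phi^t_\xi$ denotes the time-$t$ flow on $M$ of the vector field $\pi^\#\xi$: this is obtained by the change of variables $t\mapsto t+1$ in the integral defining $\beta$, combined with uniqueness of ODE solutions.

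For the three bi-realisation conditions, the cleanest strategy is a Hamilton--Jacobi / generating-function argument. One constructs a smooth function $S(m,\xi)$ on $\mathcal W$ whose differential parametrizes, via the graph map $(m,\xi)\mapsto\bigl(\alpha(m,\xi),\beta(m,\xi)\bigr)$, a Lagrangian submanifold $\Lambda$ of $T^*M\times\overline{T^*M}$ (the overline indicating the opposite symplectic structure). The Karasev averaging conditions are exactly the Hamilton--Jacobi equations for such a symplectic correspondence defined by cotangent paths. Once $\Lambda$ is known to be Lagrangian, the three conditions of Definition~\ref{def:bireal} follow from standard structural facts about symplectic correspondences: the two projections of a Lagrangian correspondence onto a symplectic realization yield a Poisson and an anti-Poisson map, and the kernels of $d\alpha$ and $d\beta$ are mutually symplectically orthogonal.

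The main obstacle will be establishing that $\Lambda$ is actually Lagrangian, a condition which ultimately encodes the Jacobi identity $[\pi,\pi]=0$; this identity is precisely the integrability condition for the cotangent path equation and, equivalently, the closedness of the candidate $1$-form $dS$. A more elementary but less illuminating fallback is to Taylor-expand $\alpha$ and $\beta$ in $\xi$: the defining integrals give $\alpha(m,\xi)=m-\tfrac12\pi^\#(m)\xi+O(|\xi|^2)$ and $\beta(m,\xi)=m+\tfrac12\pi^\#(m)\xi+O(|\xi|^2)$, which already yields the three conditions at leading order by direct computation, and the recursion at higher orders closes consistently precisely when $[\pi,\pi]=0$.
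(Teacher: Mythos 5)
The paper itself offers no proof of this proposition: it is quoted from Karasev \cite{Karasev1987} (see also \cite{Crainic2011} and \cite{oscar}), so your attempt has to stand on its own. Your first stage is sound: at $\xi=0$ the cotangent paths are constant, both averaging equations reduce to $\alpha=m$ and $\beta=m$ with identity linearization, so the implicit function theorem gives smooth maps equal to the identity on the zero section, hence submersions after shrinking $\mathcal W$; the identity $\beta(m,\xi)=\Phi^1_\xi\bigl(\alpha(m,\xi)\bigr)$ does follow from the shift $t\mapsto t-1$ together with uniqueness of the solution of the averaging equation, and your expansions $\alpha=m-\tfrac12\pi^{\#}(m)\xi+O(|\xi|^2)$, $\beta=m+\tfrac12\pi^{\#}(m)\xi+O(|\xi|^2)$ are correct.

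The gap is in the second stage, which is where all the content of the proposition lives. The framing is not well-posed: $\alpha$ and $\beta$ take values in $M$, which carries a Poisson but in general non-symplectic structure, so the graph of $(\alpha,\beta)$ is not a submanifold of $T^*M\times\overline{T^*M}$ and the ``standard structural facts about Lagrangian correspondences'' you invoke do not apply as stated. What has to be proved are the dual-pair identities on $(\mathcal W,\omega_{can})$ itself, namely $\{\alpha^*f,\alpha^*g\}_{\omega}=\alpha^*\{f,g\}_{\pi}$, $\{\beta^*f,\beta^*g\}_{\omega}=-\beta^*\{f,g\}_{\pi}$ and $\{\alpha^*f,\beta^*g\}_{\omega}=0$ (conditions 1--3 of Definition \ref{def:bireal}); neither the generating function $S$ nor the Lagrangian property of your $\Lambda$ is constructed or verified, and you concede that this is ``the main obstacle'' --- but that obstacle \emph{is} the theorem. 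The fallback is weaker still: checking the identities at first order in $\xi$ proves nothing about the full statement, and asserting that ``the recursion at higher orders closes precisely when $[\pi,\pi]=0$'' restates the claim rather than establishing it. A route that can actually be completed, and which is consistent with the paper's own spray subsection, is the one of \cite{Crainic2011}: show that $\tau$ is Poisson and $\tau\circ\Phi^1_X$ anti-Poisson with symplectically orthogonal fibers for the averaged form $\Omega=\int_0^1 (\Phi^{s}_X)^{*}\omega\,ds$, using homogeneity of the spray, and then transport the statement to $\omega_{can}$ by the Moser symplectomorphism; alternatively, Karasev's original generating-function computation, which your sketch only gestures at, must be carried out in full.
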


\begin{remark}
This bi-realisation is explicit provided that the geodesic flow $\Xi$ and its integral can be computed. It is computable by quadrature if so is the geodesic flow, which is the case for a large class of Poisson structures.    
\end{remark}

\subsubsection*{Poisson Spray and Moser's trick}
A sligthly more academic (axiomatic) and thus conceptual approach to the above construction may be presented using the notion of Poisson spray.

\begin{definition}
Let $(M,\pi)$ be a Poisson manifold, $\tau \colon T^*M \to M$ the cotangent projection and for $\lambda \in \mathbb{R}^*,$ $m_\lambda \colon \xi \in T^*M \mapsto \lambda \xi \in T^*M$ the fiberwise multiplication by $\lambda.$ $X \in \mathfrak X (T^*M)$ is said to be a Poisson spray if it verifies the following two conditions:
\begin{enumerate}
    \item $\forall \xi \in T^*M,$ $\text{d}_\xi \tau .X(\xi) = \pi^{\#}(\xi),$
    \item $X$ is homogeneous of degree 1: $\forall \lambda \in \mathbb{R}^*,$ $\text{m}_\lambda^*X = \lambda X,$ \\
    i.e. $\text{d}_{\alpha \xi} m_{\lambda^{-1}} . X(\alpha \xi) = \lambda X(\xi).$
\end{enumerate}
\end{definition}

\begin{example}
For some choice of coordinates $x_i$, the Poisson tensor has the form $$
\pi(x) = \sum_{1 \leq i<j \leq n} \pi_{ij}(x) \partial_{x_i}\wedge \partial_{x_j}.$$ 
Denoting $(x,\xi)$ the induced cotangent coordinates,  
$$X = \sum_{1 \leq i<j \leq n} \pi_{ij}(x) \xi_j \partial_{x_i}$$ is a Poisson spray.  
\end{example}

The second point of its definition implies that $X$ vanishes on the zero section $0_{T^*M}$. Consequently, there exists a neighborhood $\mathcal{W}$ of $0_{T^*M}$  such that the time-1 flow of $X$ $\Phi^1_X \colon U \to \Phi^1_X(U)$ is a well-defined global diffeomorphism onto its image.

\begin{remark}
For a given Poisson structure, Poisson sprays always exist (see \cite{Crainic2011}). However, Poisson sprays are far from being unique. For instance, one can add a term of the form "$f(x)\xi_i \xi_j \partial_{\xi_j}$" to it -- this is an important freedom that allows to construct explicit integration of the flow above in a lot of important cases.
\end{remark}

\begin{theorem}\label{symp_grpd}
Any Poisson spray induces target, source and multiplicative form of the local symplectic groupoid near $0_{T^*M}$ in the following way :
\begin{enumerate}
    \item $\bar \alpha  = \tau \colon T^*M \to M,$
    \item $\bar \beta = \tau \circ \Phi^1_X,$
    \item $\Omega = \int_0^1 \Phi^{\,s}_X {}^* \omega \, \text{d}s,$ where $\omega$ is the canonical symplectic form.
\end{enumerate}
\end{theorem}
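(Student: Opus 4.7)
The plan is to verify the three conditions of Definition \ref{def:bireal} for the data $(\mathcal{W}, \bar\alpha, \bar\beta, \Omega)$, after first checking that $\Omega$ is symplectic on a neighborhood of $0_{T^*M}$. The central technical device throughout will be the interpolating family $\omega_s := (\Phi^s_X)^* \omega$, which satisfies $\omega_0 = \omega$ and $\Omega = \int_0^1 \omega_s\, ds$, together with the Cartan-based identity
$$\frac{d\omega_s}{ds} = (\Phi^s_X)^*(\mathcal L_X\omega) = d\bigl((\Phi^s_X)^* \iota_X \omega\bigr),$$
which shows that $\omega_s - \omega$ is exact and isolates the role of the Poisson spray axioms.

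For symplecticity of $\Omega$, closedness is immediate as $\Omega$ is an integral of closed forms. For non-degeneracy, the homogeneity condition $m_\lambda^* X = \lambda X$, applied in the limit $\lambda \to 0$, forces $X$ to vanish on the zero section; hence $\Phi^s_X$ fixes every point of $0_{T^*M}$, and an explicit computation in cotangent coordinates shows that $\Omega$ restricts on $0_{T^*M}$ to a 2-form whose horizontal/vertical block equals the one of $\omega$ (i.e. $\pm I$) plus a vertical/vertical block involving $\pi$. Such a block matrix is manifestly non-degenerate, and openness of non-degeneracy extends the property to a neighborhood of $0_{T^*M}$.

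For $\bar\alpha = \tau$ being a Poisson map, I would directly verify $\{\tau^* f, \tau^* g\}_\Omega = \tau^*\{f, g\}_\pi$ by characterising the Hamiltonian vector field of $\tau^* f$ with respect to $\Omega$. The defining relation $d\tau \cdot X(\xi) = \pi^\#(\xi)$ controls the horizontal component of $\iota_X \omega$ and, through the Cartan identity above, transports the Poisson structure $\pi$ along the flow $\Phi^s_X$. Integrating in $s$ then yields the desired bracket identity. The anti-Poisson property of $\bar\beta = \tau \circ \Phi^1_X$ follows from a parallel argument in which the interval of integration is reversed, the sign change appearing naturally from this reversal. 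Symplectic orthogonality of the $\alpha$- and $\beta$-fibers is then deduced from the explicit pairing $\Omega(v, w) = \int_0^1 \omega((\Phi^s_X)_* v, (\Phi^s_X)_* w)\, ds$: the $\alpha$-fiber tangents are vertical (kernel of $d\tau$) at $s = 0$, and the $\beta$-fiber tangents are $(\Phi^1_X)_*$-images of vertical vectors at $s = 1$, so the integrand vanishes pointwise on matched pairs by flow equivariance, again using only the spray equation.

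The main obstacle is the Moser-type computation underpinning the Poisson property of $\bar\alpha$: although each ingredient (Cartan's formula, the spray equation, the homogeneity of $X$) is elementary, assembling them into the identity $\{\tau^* f,\tau^* g\}_\Omega = \tau^*\{f,g\}_\pi$ requires careful bookkeeping along the cotangent paths generated by $X$ and a clean choice of local trivialisation. As a sanity check, the explicit spray $X = \sum_{i<j}\pi_{ij}(x)\xi_j \partial_{x_i}$ of the preceding example offers a closed-form computation that verifies all three bi-realisation conditions and clarifies the geometric mechanism by which $\pi$ emerges on the base from integrating $\omega$ along the flow of the spray.
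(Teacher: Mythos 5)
The paper does not actually prove Theorem \ref{symp_grpd}: it is imported without proof from Crainic--M\u{a}rcu\c{t} \cite{Crainic2011}, consistent with the paper's announced policy of deferring proofs to the literature. So your attempt can only be judged on its own merits. The parts concerning $\Omega$ itself are sound: closedness is immediate, and since homogeneity forces $X$ to vanish on $0_{T^*M}$, one gets $d_{(x,0)}\Phi^s_X(v,\eta)=(v+s\,\pi^\#\eta,\eta)$ and hence $\Omega_{(x,0)}\bigl((v_1,\eta_1),(v_2,\eta_2)\bigr)=\langle \eta_1,v_2\rangle-\langle \eta_2,v_1\rangle+\pi(\eta_1,\eta_2)$, which is nondegenerate; openness then gives symplecticity near the zero section, as you say.

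The two substantive claims, however, are not established. First, the Poisson property of $\bar\alpha=\tau$ --- the heart of the theorem --- is only announced, and you acknowledge that the ``bookkeeping'' is left open. More importantly, your sketch uses only the spray axiom $d\tau\cdot X(\xi)=\pi^\#(\xi)$, invoking homogeneity solely to get $X|_{0_{T^*M}}=0$. But homogeneity enters essentially here: the working identity is $\Phi^s_X\circ m_\lambda=m_\lambda\circ\Phi^{\lambda s}_X$, which together with $m_\lambda^*\omega=\lambda\omega$ yields $m_\lambda^*\omega_s=\lambda\,\omega_{\lambda s}$ and $m_\lambda^*\Omega=\int_0^\lambda\omega_u\,du$; this is what allows one to compute $\iota_V\Omega$ for constant vertical vector fields and to identify the induced bracket on $M$ with $\pi$. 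Without it the argument does not close. Second, the orthogonality step is wrong as stated: the integrand $\omega\bigl((\Phi^s_X)_*v,(\Phi^s_X)_*w\bigr)$ does \emph{not} vanish pointwise for $v\in\ker d\bar\alpha$, $w\in\ker d\bar\beta$. Take the paper's own symplectic spray $X=\xi_p\partial_q-\xi_q\partial_p$, $v=\partial_{\xi_q}$ and $w=\partial_q-\partial_{\xi_p}\in\ker d\bar\beta$: then $(\Phi^s_X)_*v=\partial_{\xi_q}-s\partial_p$, $(\Phi^s_X)_*w=(1-s)\partial_q-\partial_{\xi_p}$, and the integrand equals $1-2s$, nonzero for $s\neq 1/2$ although its integral vanishes. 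Orthogonality is therefore a statement about the integral, not the integrand, and again requires the homogeneity machinery (or the multiplicativity of $\Omega$ --- which is itself part of the statement and which your proposal does not address). Both points are handled in \cite{Crainic2011}, to which you should reduce rather than reprove.
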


Note that $\Omega$ is symplectic up to shrinking of $\mathcal{W}.$

\begin{theorem}
Any Poisson spray induces a bi-realisation.
\end{theorem}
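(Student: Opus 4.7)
The plan is to combine Theorem \ref{symp_grpd} with a Moser-type argument. Theorem \ref{symp_grpd} already produces a candidate triple $(\bar\alpha,\bar\beta,\Omega)$ satisfying all the algebraic compatibilities one wants (source is Poisson, target is anti-Poisson, fibers symplectically orthogonal), but with respect to the modified symplectic form $\Omega=\int_0^1 \Phi^s_X{}^*\omega\,\mathrm{d}s$ rather than the canonical one $\omega$ required in Definition \ref{def:bireal}. The entire task therefore reduces to transporting this structure to $\omega$ via a symplectomorphism $\psi$ that fixes the zero section pointwise, and then declaring $\alpha:=\bar\alpha\circ\psi$ and $\beta:=\bar\beta\circ\psi$.

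The first step is to check that $\omega$ and $\Omega$ agree on the zero section $0_{T^*M}$. Since $X$ is homogeneous of degree one and vanishes on $0_{T^*M}$, the flow $\Phi^s_X$ restricts to the identity there, so $\Phi^s_X{}^*\omega|_{0_{T^*M}}=\omega|_{0_{T^*M}}$ for every $s$, hence $\Omega|_{0_{T^*M}}=\omega|_{0_{T^*M}}$. This is exactly the hypothesis needed to run Moser's trick on the one-parameter family $\omega_t=(1-t)\omega+t\,\Omega$. I would first shrink $\mathcal W$ so that every $\omega_t$ is non-degenerate (possible by continuity, since $\omega_0=\omega_1=\omega$ on $0_{T^*M}$ and non-degeneracy is an open condition), then invoke the relative Poincar\'e lemma to write $\Omega-\omega=\mathrm{d}\eta$ for some $1$-form $\eta$ vanishing on $0_{T^*M}$, define the time-dependent vector field $Y_t$ by $\iota_{Y_t}\omega_t=-\eta$, and take $\psi$ to be its time-$1$ flow; by construction $\psi^*\Omega=\omega$ and $\psi|_{0_{T^*M}}=\mathrm{id}$.

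With $\psi$ in hand, the three conditions of Definition \ref{def:bireal} follow essentially for free. Since $\bar\alpha\colon(T^*M,\Omega)\to(M,\pi)$ is Poisson and $\psi\colon(T^*M,\omega)\to(T^*M,\Omega)$ is a symplectomorphism, the composition $\alpha=\bar\alpha\circ\psi$ is a Poisson map from $(T^*M,\omega)$ to $(M,\pi)$; the same argument with signs gives $\beta=\bar\beta\circ\psi$ anti-Poisson. The symplectic orthogonality of the $\alpha$- and $\beta$-fibers is invariant under symplectomorphisms, so it transfers from the $\Omega$-orthogonality of the $\bar\alpha$- and $\bar\beta$-fibers given by Theorem \ref{symp_grpd}. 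Finally, $\psi$ fixes $0_{T^*M}$, so $\alpha(m,0)=\bar\alpha(m,0)=m$ and similarly $\beta(m,0)=\tau(\Phi^1_X(m,0))=m$, and $\alpha,\beta$ remain surjective submersions since these are local properties preserved under composition with a diffeomorphism.

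I expect the main obstacle to be purely technical rather than conceptual: controlling the size of the neighborhood $\mathcal W$ throughout the construction. The time-$1$ flow $\Phi^1_X$ is only defined on some neighborhood of $0_{T^*M}$, the family $\omega_t$ is only symplectic on a possibly smaller neighborhood, and Moser's flow $\psi$ requires yet another shrinking. The proof is clean as long as one agrees to work germ-wise near the zero section and records at each step that all operations take place on a sufficiently small open set, which matches the local nature of bi-realisations already emphasized throughout the paper.
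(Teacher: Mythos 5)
Your proposal follows essentially the same route as the paper: apply Moser's trick to produce a symplectomorphism $\psi$ with $\psi^*\Omega=\omega$ fixing the zero section, then set $\alpha=\bar\alpha\circ\psi$ and $\beta=\bar\beta\circ\psi$ and transport the groupoid compatibilities from Theorem \ref{symp_grpd}. The paper's proof is just a terser version of yours, so there is nothing further to reconcile.
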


\begin{proof}
By Moser's trick, $\omega$ and $\Omega$ are symplectomorphic in a neighborhood of the zero section in $T^*M$: 
$ \omega = \psi^* \Omega.$ Moreover, $\psi$ is the identity map on $M \subset T^*M$, and so is its differential at any point of $M$.
Then, a bi-realisation on this neighborhood is given by :
$    \alpha = \psi^* \bar \alpha, \quad    \beta = \psi^* \bar \beta.$
\end{proof}

\noindent
However, for a generic Poisson structure, the Poisson spray, and its flow $\psi$, may not be explicitly computable.

\subsection{Examples}  \label{sec:poisson-examples}
\;
In what follows we construct a symplectic bi-realisation for several classes of Poisson structures, using various techniques, including Poisson sprays. We start with the simplest Poisson structure given by a symplectic form written in canonical (Darboux) coordinates, to recover some symplectic integrators.
Then we continue with a couple of constructions that will later be used in the numerical tests.

\subsubsection*{Symplectic case.} 
\; \\
Let $M = \mathbb{R}^{2n} = \{(q,p)\}$, then $\pi = \partial_p \wedge \partial_q.$ Let $(q,p,\xi_q, \xi_p)$ be cotangent coordinates on $R^{4n}.$ A Poisson spray is $X(q,p,\xi_q, \xi_p) = \xi_p \partial_q - \xi_q \partial_p.$ \\
The objects of theorem \ref{symp_grpd} are: 
\begin{enumerate}
    \item $\bar \alpha \colon T^*M \to M \colon q,p,\xi_q, \xi_p \mapsto (q,p),$
    \item $\bar \beta \colon T^*M \to M \colon q,p,\xi_q, \xi_p \mapsto (q+ \xi_p, p - \xi_q),$
    \item $\Omega = \omega + \frac{1}{2} \text{d}p  \wedge \text{d}\xi_p  - \frac{1}{2} \text{d}\xi_q \wedge \text{d}q - \frac{1}{3} \text{d}\xi_q \wedge \text{d}\xi_p$.
\end{enumerate}
The symplectomorphism between $\Omega$ and $\omega$ is given by
$$
\Psi \colon 
    \begin{pmatrix}
    q\\
    p\\
    \xi_q\\
    \xi_p
    \end{pmatrix} \mapsto \begin{pmatrix}
    q - \frac{\xi_p}{2}\\
    p + \frac{\xi_q}{2} \\
    \xi_q \\
    \xi_p
    \end{pmatrix},
$$
and the resulting bi-realisation is 
$$
    \left\{
    \begin{array}{ll}
        \alpha: (q,p,\xi_q,\xi_p) \mapsto (q - \frac{1}{2} \xi_p, p + \frac{1}{2} \xi_q)\\
        \beta: (q,p,\xi_q,\xi_p) \mapsto (q + \frac{1}{2}\xi_p, p - \frac{1}{2}\xi_q)
    \end{array}
    \right. .
$$

\subsubsection*{Quadratic Poisson structures} 
\; \\
The following example will be important for Lotka-Volterra systems. Consider $M = \mathbb{R}^n$ and a quadratic Poisson structure:
\begin{equation} \label{eq:quad-Poisson}
\pi = \sum_{1 \leq i,j \leq n} a_{ij} x_i x_j \partial_{x_i} \wedge \partial_{x_j}    
\end{equation}
 Using the (natural) Poisson spray of \cite{Li2018}:
$$
    X = \sum_{1 \leq i,j \leq n} a_{ij} x_i x_j \xi_i \partial_{x_j} - \sum_{1 \leq i,j \leq n} a_{ij} x_i \xi_i \xi_j \partial_{\xi_j},
$$
and the symplectomorphism:
$$
\Psi \colon 
    \begin{pmatrix}
    x_j\\
    p_j
    \end{pmatrix} \mapsto \begin{pmatrix}
    e^{-\frac{1}{2}\sum_i a_{ij}x_i p_i} x_j \\
    e^{\frac{1}{2}\sum_i a_{ij}x_i p_i} p_j
    \end{pmatrix},
$$
one constructs the following global bi-realisation:
$$
    \left\{
    \begin{array}{ll}
        \alpha: (x,p) \mapsto \left( e^{-\frac{1}{2} \sum_i a_{ij} x_i p_i }. x_j \right)_{j=1, \dots, n}\\
        \beta: (x,p) \mapsto \left( e^{\frac{1}{2} \sum_i a_{ij} x_i p_i }. x_j \right)_{j=1, \dots, n}
    \end{array}
    \right. .
$$

\subsubsection*{Dual of a Lie algebra: cotangent lifts} \; \\
In the case of the linear Poisson structure on the dual of a Lie algebra, there is another way of constructing bi-realisations.
\begin{proposition} \label{Lie-Poisson}
Let  $\mathfrak{g}$ be a Lie algebra of a Lie group $G$, and
$\varphi \colon G \to \mathfrak{g}$ a local diffeomorphism in $1_G$, bijective on an open subset $\mathcal V$ containing the unit $1_G$, such that :
\begin{enumerate}
    \item $\varphi(1_G) = 0$
    \item $T_{1_G} \varphi = Id.$
\end{enumerate}
Let us denote by $\psi$ the inverse of $\varphi$ and $\bar{\mathcal{V}} = \varphi(\mathcal{V}).$ \\
Then a bi-realisation of the Lie-Poisson structure on $\mathfrak{g}^* $ is given by:
$$
    \left\{
    \begin{array}{ll}
        \alpha \colon \bar{\mathcal{V}} \times \mathfrak{g}^* \to \mathfrak{g}^*: (\eta,\xi) \mapsto R^*_{{\psi(\eta)}} \left (T_{\psi(\eta)}^*\varphi \right).\xi\\
        \beta \colon \bar{\mathcal{V}} \times \mathfrak{g}^* \to \mathfrak{g}^*: (\eta,\xi) \mapsto L^*_{{\psi(\eta)}} \left (T_{\psi(\eta)}^*\varphi\right).\xi =\text{Ad}^*_{\psi(\eta)}.\alpha(\eta,\xi)
    \end{array}
    \right. .
$$
\end{proposition}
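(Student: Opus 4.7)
The plan is to reduce the statement to the classical fact that the cotangent bundle $T^*G$, equipped with its canonical symplectic form, is a (local) symplectic groupoid integrating the Lie--Poisson structure on $\mathfrak{g}^*$, with source and target given by right and left trivialisation:
\begin{equation*}
\bar{\alpha}_0 \colon (g,\xi_g) \mapsto R_g^* \xi_g, \qquad \bar{\beta}_0 \colon (g,\xi_g) \mapsto L_g^* \xi_g.
\end{equation*}
This classical fact (which we would invoke rather than reprove) guarantees that the triple $(T^*G|_{\mathcal V}, \bar{\alpha}_0, \bar{\beta}_0)$ satisfies the three conditions of Definition \ref{def:bireal} on a neighborhood of the unit section. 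The statement of the proposition then becomes a matter of transporting this bi-realisation through the chart $\varphi$.

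The first step is to observe that the cotangent lift of the local diffeomorphism $\varphi$, namely
\begin{equation*}
\Phi \colon \bar{\mathcal V} \times \mathfrak{g}^* \longrightarrow T^*G|_{\mathcal V}, \qquad (\eta,\xi) \longmapsto \bigl(\psi(\eta),\, T^*_{\psi(\eta)}\varphi \cdot \xi\bigr),
\end{equation*}
is a symplectomorphism when $\bar{\mathcal V} \times \mathfrak{g}^* \subset \mathfrak{g}\times \mathfrak{g}^*$ carries the canonical form \eqref{eq:omegaCan}. This is a general fact about cotangent lifts of diffeomorphisms and requires no further work. The second step is a direct computation: unfolding the definitions shows
\begin{equation*}
\bar{\alpha}_0 \circ \Phi = \alpha, \qquad \bar{\beta}_0 \circ \Phi = \beta,
\end{equation*}
i.e. the formulas of the proposition are literally the pullbacks of right and left trivialisation through $\Phi$. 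Hence $\alpha$ is Poisson and $\beta$ is anti-Poisson, since post-composing with a symplectomorphism preserves these properties; and $\mathrm{ker}(T\alpha)$ and $\mathrm{ker}(T\beta)$ remain symplectically orthogonal, being the $\Phi$-preimages of the corresponding distributions on $T^*G$.

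The third step is to verify the bi-surjection conditions: $\alpha$ and $\beta$ are submersions onto their images because $\bar{\alpha}_0, \bar{\beta}_0$ are and $\Phi$ is a diffeomorphism; the identity $\alpha(0,\xi) = \beta(0,\xi) = \xi$ follows from the two hypotheses on $\varphi$, since $\psi(0)=1_G$ forces $R^*_{\psi(0)}=L^*_{\psi(0)}=\mathrm{Id}$, while $T_{1_G}\varphi=\mathrm{Id}$ forces $T^*_{\psi(0)}\varphi = \mathrm{Id}_{\mathfrak{g}^*}$. Finally, the alternative formula $\beta = \mathrm{Ad}^*_{\psi(\eta)}\alpha$ drops out of the classical identity $L_g = R_g \circ I_g$, with $I_g$ denoting conjugation by $g$: differentiating at $1_G$ yields $T_{1_G} L_g = T_{1_G} R_g \circ \mathrm{Ad}_g$, whose transpose is exactly $L_g^* = \mathrm{Ad}_g^* \circ R_g^*$ on $T_g^*G$.

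The conceptual heart of the argument, and the only step that is not purely routine, is the invocation of $T^*G \rightrightarrows \mathfrak{g}^*$ as a symplectic groupoid for the Lie--Poisson structure. If one wanted to avoid this black box, the main technical obstacle would be a direct verification that $\bar{\alpha}_0$ intertwines the canonical symplectic Poisson bracket on $T^*G$ with the Lie--Poisson bracket on $\mathfrak{g}^*$, which in practice reduces to checking the claim on linear functions on $\mathfrak{g}^*$ (i.e.\ on elements of $\mathfrak{g}$) where both sides are governed by the Lie bracket and left/right-invariant vector fields on $G$.
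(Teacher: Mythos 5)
Your argument is correct, and it supplies a proof where the paper gives none: Proposition \ref{Lie-Poisson} is stated without proof, followed only by a descriptive remark identifying $\alpha$ and $\beta$ as duals of inverses of the right and left logarithmic derivatives of $\psi$. That remark, together with the explicit use of the cotangent lift $T^*\varphi$ in the $so(n)$ example, indicates that your route --- pulling back the standard (local) symplectic groupoid $T^*G \rightrightarrows \mathfrak g^*$, with source and target the right and left trivialisations, through the cotangent lift $\Phi$ of $\varphi$ --- is exactly the mechanism the authors have in mind, so you are not taking a genuinely different path but rather making the intended one explicit. The individual steps all check out: $\Phi$ is a symplectomorphism because cotangent lifts of diffeomorphisms are; $\bar\alpha_0 \circ \Phi$ and $\bar\beta_0 \circ \Phi$ reproduce the stated formulas; composition with a symplectomorphism preserves the Poisson/anti-Poisson property and the symplectic orthogonality of the $\alpha$- and $\beta$-fibers; the unit condition follows from $\psi(0)=1_G$ and $T_{1_G}\varphi = \mathrm{Id}$; and $L_g^* = \mathrm{Ad}_g^* \circ R_g^*$ gives the second expression for $\beta$.

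Two small points worth flagging. First, which of the right/left trivialisations is Poisson and which is anti-Poisson for the Lie--Poisson bracket depends on sign conventions (for $\omega_{can}$, for the identification $T^*\mathfrak g \cong \mathfrak g \times \mathfrak g^* \cong T^*\mathfrak g^*$, and for the Lie--Poisson structure itself); since you invoke the classical groupoid fact as a black box, you should state the convention you use so that $\alpha$ lands on the Poisson side as the proposition requires. Second, your final paragraph correctly isolates the only non-routine ingredient --- that $\bar\alpha_0$ intertwines the canonical bracket on $T^*G$ with the Lie--Poisson bracket --- and the reduction to linear functions on $\mathfrak g^*$ is the standard way to verify it; citing \cite{Weinstein1987} or \cite{daSilva1999} for this is entirely in the spirit of the paper.
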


Let us describe more precisely these source and target maps. They are the dual of the inverse of the so-called right and left logarithmic derivatives of $\psi$. Since $\varphi $ maps $ \mathcal{V} \subset G$ to $\bar{\mathcal{V}} \subset \mathfrak{g}$, its differential $T\varphi $ maps $T\mathcal V$ to $T \bar{\mathcal{V}} \simeq \bar{\mathcal{V}} \times \mathfrak g$.
Composing this map with the right and left identifications of $T \mathcal{V}$ with $\mathcal{V} \times \mathfrak g$ and using the diffeomorphism $\psi,$ one gets two families indexed by $\eta \in \bar{\mathcal{V}}$ of linear invertible endomorphisms of $\mathfrak g.$
The source and targets above are the dual of these maps.\footnote{The reader familiar with the notion of \textit{logarithmic derivative} will notice that those maps are the inverse of the dual of the logarithmic derivative of $\psi$ after right and left trivialisations of $TG.$}

\begin{remark}
Notice that we no not assume $\varphi $ to be the logarithm, i.e. the inverse of the exponential map. It may be any local diffeomorphism. In fact, the logarithm map may not be a good choice since its differential may be too complicated to compute.
\end{remark}

\begin{remark}
If  $ \varphi$ and its inverse are explicitly computable, then so are $\alpha $ and $\beta $.  
\end{remark}

\begin{example}
\normalfont
Let us spell-out this construction in the case of the algebra $so(n)$ of anti-symmetric matrices. The scalar product $<.,.> \colon (X,Y) \in so(n) \mapsto \text{Tr}(X^T.Y) \in \mathbb{R}$ induces an isomorphism between $so(n)$ and its dual. The local diffeomorphism we use is
$$
    \varphi \colon SO(n)_+ \to so(n) : Q \mapsto 4 \frac{Q-I}{Q + I}
$$
with the inverse
$$
    \psi \colon so(n) \to SO(n)_+ : A \mapsto  \frac{4 + A}{4 - A}.
$$
Its derivative is
$$
T_Q \varphi \colon so(n) \to so(n) : H \mapsto 4(I + Q^{-1})^{-1}.H.(I+Q)^{-1}
$$
and the transpose of it by $<.,.>$ is its cotangent lift:
$$
\begin{array}{cccc}
    T^* \varphi  \colon & so(n) \times so(n)^* & \to &SO(n) \times so(n) \\
    &(H,x)& \mapsto &(\psi(H), (I+\frac{A}{4}).x.(I-\frac{A}{4}).
\end{array}
$$
Since the metric is Ad-invariant, $\text{Ad}^*_Q x = Q^{-1}.x.Q $. 
And the source and target are:
$$
    \left\{
    \begin{array}{ll}
        \alpha \colon so(n) \times so(n) \to so(n): (A, x) \mapsto (1+\frac{A}{4}).x.(1-\frac{A}{4}) \\
        \beta \colon so(n) \times so(n) \to so(n) \colon (A, x) \mapsto (1-\frac{A}{4}).x.(1+\frac{A}{4})
    \end{array}
    \right. .
$$
\end{example}

\section{Explicit construction of Poisson Hamiltonian integrators}
\label{sec:PHI-constr}

We are now ready to put together all what has been discussed in the context of Poisson geometry in the two previous sections, and make the final step to construction of the appropriate structure preserving integrators. To sum it up, we start with a Poisson structure $\pi $ defined on an open subset $\mathcal U \subset \mathbb R^n $. The only assumption that we need it that it admits an \emph{explicit} bi-realisation $\mathcal W \toto \mathcal U$.

 We recall that $\mathcal W$ is an open subset of $\mathcal U \times \mathbb R^n$ containing $\mathcal U \times \{0\} $. 
 We denote its source by $\alpha $, its target by $\beta$, and its base map by $\tau $. We denote by ${\mathbf{0}}: \mathcal U \to \mathcal W $ the map ${\mathbf{0}} (x)=(x, 0)  $.
  $$ \xymatrix{ &\mathcal W \ar@/^/[d]^{\tau} \ar[dl]_{\alpha} \ar[dr]^{\beta} & \\ \mathcal U & \ar@/^/[u]^{{\mathbf{0}}} \mathcal U& \mathcal U}  $$
 
 \begin{remark}
 \normalfont
 We recall that for any $(x, p) \in \mathcal W$, $\alpha(x, p)  $ and $\beta(x, p) $ are in the same symplectic leaf of $\pi $. This leaf is not the same symplectic leaf at the one containing $\tau(x, p)=x $.
 \end{remark}

\noindent
Consider again the Hamiltonian differential equation 
\begin{equation}
     \label{eq:hamH}\dot{x}(t) = \pi ( x(t)) (\nabla H(x(t))) 
\end{equation}
for some Hamiltonian $ H \in C^\infty(\mathcal U)$.

We claim that we can construct an explicit Poisson Hamiltonian integrator of order $k$ for
\eqref{eq:hamH}. There are several steps that we now present.

\begin{enumerate}
    \item[Step 1.] To start with, one needs to compute the first $k$  terms of the 
\emph{Hamilton-Jacobi transform} of $H$. The latter is a formal series of smooth functions on $\mathcal U $ of the form 
    $$ \mathcal S_h (H)(x) = h S_1(x) + \frac{h^2}{2} S_2(x)   + \frac{h^{3}}{6} S_3 (x) + \dots ,$$
    whose coefficients are computed by recursion as follows:
    \begin{enumerate}
        \item  Set $S_1(x) = H(x) $.
        \item[] (In particular,  for $k=1$, the truncation of the generating transform of $H(x)$ is simply  $ h H(x) $.)
        \item The smooth function $S_{k+1}(x) $ is then given by the recursive formula:       
$$
S_{i+1}(m) = \left. \frac{d^i}{dt^i}\right|_{t=0} H\left( \alpha\left(d_m S_t^{(i)} \right)\right)
$$
where we write $S_t^{(i)} =\sum_{j=1}^i \frac{t^j}{j!} S_j.$
          \item[] Since the bi-realisation is supposed to be explicitly known, the construction of these terms can be done explicitly as well.
    \end{enumerate}
    
    \item[Step 2.]
    Now starts the construction of the Poisson Hamiltonian  integrator itself. Choose a timestep, i.e. fix a small positive real number $h$. 
We define a numerical scheme approximating the integral curve of \eqref{eq:hamH} with initial value $x_0 $ by constructing the sequence $ (x_n)$ according to the following recursion:
\begin{enumerate}
    \item  Assume that for every $n \in \mathbb N$,
 the equation $$  \alpha \left (y_n, \sum_{i=1}^k h^i \, \nabla S_i (y_n) )\right) = x_n $$ admits a unique solution $y_n$ (otherwise, it means that the time step is too large). 
\item 
     Set
      $$ x_{n+1} := \beta \left(y_n, \sum_{i=1}^k h^i \, \nabla S_i (y_n) \right).$$
\end{enumerate}
\end{enumerate}

\begin{remark} \label{ca}
    The computations related to formal power series in Step 1. can be done efficiently with computer algebra tools. The resolution of the implicit relation in Step 2 is done approximately (for example by fixed point techniques), but can eventually be done with machine precision.
\end{remark}

\begin{example}
\normalfont
For $k=1$, this numerical scheme consists in mapping $x_n $ to $\beta(y_n, h \nabla H(y_n)) $ where $y_n$ is the unique solution of $\alpha (y_n, \nabla H(y_n))=x_n $.
\end{example}

\begin{example}
In the case of a Lie-Poisson structure on a Lie algebra $\mathfrak g $ equipped with a local diffeomorphism $\varphi \colon G \to \mathfrak g $ with inverse $\psi \colon  \mathfrak g \to G$, for $k=1$, our Hamiltonian Poisson integrator consists in 
\begin{enumerate}
\item Compute $a \mapsto \left( \psi(\eta) \right)^{-1} T_\eta \psi (a)$\footnote{where $\left( \psi(\eta) \right)^{-1}$ is the inverse of $\psi(\eta)$ for the group law of $G$}, which is a family depending on $\eta \in \bar{\mathcal{V}}$ of diffeomorphisms $  \mathfrak g \simeq \mathfrak g $. Then consider the dual  of its inverse, which is now a family of maps $D_{\eta} \colon \mathfrak g^* \simeq \mathfrak g^*$ being the identity map for $\eta =0 $. Then  solve $ D_{\Delta t \nabla H(y_n)} (y_n)  = x_n   $.
\item Consider
$$x_{n+1} = {\mathrm{Ad^*}}_{\psi(\Delta t \nabla y_n H)}  x_n $$
\end{enumerate}

\end{example}

\begin{remark}
\normalfont 
By construction, $x_n $ and $ x_{n+1}$ belong to the same symplectic leaf. But to go from $x_n $ to $x_{n+1}  $ one uses a point $y_n $ which is not, in general, on that common symplectic leaf. This is extremely counter-intuitive. 
\end{remark}

\begin{remark}
\normalfont
Let us give the first terms of the Hamilton-Jacobi transform:
\begin{enumerate}
    \item $S_1 = H$
    \item $S_2 ={\mathbf{0}}^*\left( \frac{1}{2}  \{\alpha^*H, \tau^*S_1 \} \right)$
    \item 
$ S_3 = {\mathbf{0}}^* \left( \frac{1}{3}\{ \alpha^*H, \tau^*S_2 \} + \frac{1}{6} \{ \{\alpha^*H, \tau^*S_1 \}, \tau^* S_1 \}\right)$
\item $ S_4 = {\mathbf{0}}^*\left( \frac{1}{4}\{\alpha^*H, \tau^*S_3 \} + \frac{1}{3!} \{ \{ \alpha^*H, \tau^*S_2 \}, \tau^*S_1 \} + \right.$ \\ $\left. + \frac{1}{4!} \{ \{ \{\alpha^*H, \tau^*S_1 \}, \tau^*S_1 \}, \tau^*S_1\}\right)
 $
\end{enumerate}
Above, ${\mathbf{0}}^* $ means that the function on $\mathcal W$ is restricted to $\mathcal U\times \{0\}$, and therefore considered as a function on $\mathcal U$.

\end{remark}

\begin{theorem}
\label{theo:explicit}
The above numerical scheme defines a Hamiltonian Poisson integrator at order $k$ for the Hamiltonian differential equation \eqref{eq:hamH}.
\end{theorem}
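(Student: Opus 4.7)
The plan is to verify, in turn, that the iteration map $\phi_h \colon x_n \mapsto x_{n+1}$ (i) is well-defined for $h$ small enough, (ii) is a Poisson diffeomorphism arising as the time-$1$ flow of a time-dependent Hamiltonian, and (iii) agrees with the exact time-$h$ flow $\Phi_h^H$ of $H$ up to order $h^k$. Items (i)--(ii) will deliver the ``Poisson Hamiltonian'' part of Definition \ref{def:HamPoissonInt}, while (iii) will deliver the ``order $k$'' part.

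For (i) and (ii), introduce the truncated generating function $F_h(y) := \sum_{i=1}^k \tfrac{h^i}{i!} S_i(y)$ and the submanifold
\[ \Sigma_h := \{(y, \nabla F_h(y)) : y \in \mathcal U\} \subset \mathcal W. \]
As the graph of the exact $1$-form $d F_h$, $\Sigma_h$ is Lagrangian with respect to $\omega_{can}$. At $h=0$ it reduces to $\mathcal U \times \{0\}$, on which both $\alpha$ and $\beta$ restrict to the identity; by the implicit function theorem, for $h$ small enough $\alpha|_{\Sigma_h}$ is a diffeomorphism onto its image, the equation defining $y_n$ admits a unique solution, and $\phi_h = \beta \circ (\alpha|_{\Sigma_h})^{-1} = \underline{\Sigma}_h$ is well-defined. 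Proposition \ref{prop:bi-real-poisson} then applies verbatim: $\phi_h$ is a Poisson diffeomorphism of $(\mathcal U,\pi)$ and coincides with the value at time $1$ of the flow of some time-dependent Hamiltonian vector field on $\mathcal U$, producing the family $(H_t)$ of Definition \ref{def:HamPoissonInt}.

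For (iii), I would introduce the formal Hamilton--Jacobi problem on $\mathcal U$,
\[ \partial_t S_t(x) = H\!\bigl(\alpha(x, \nabla S_t(x))\bigr), \qquad S_0 = 0. \]
Because $\alpha(x,0)=x$, evaluation at $t=0$ yields $S_1 = H$, and successive differentiations at $t=0$ produce precisely the recursion defining $S_{i+1}$ in Step~1 of the construction; thus $F_h$ is the $k$-th order Taylor polynomial in $h$ of the exact solution $S_h$, i.e. $F_h = S_h + o(h^k)$. The key geometric fact, established in \cite{oscar}, is that the Lagrangian section $\{(y, \nabla S_t(y))\}$ generates through $\beta\circ\alpha^{-1}$ exactly the time-$t$ Hamiltonian flow $\Phi_t^H$ of $H$. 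Combined with smooth dependence of $\underline{\,\cdot\,}$ on the underlying Lagrangian section, this transfers the approximation $F_h = S_h + o(h^k)$ to $\phi_h = \Phi_h^H + o(h^k)$, which is the required order-$k$ consistency for \eqref{eq:hamH}.

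The principal obstacle is the geometric input used at the end of step (iii): that the exact Hamilton--Jacobi equation on a bi-realization truly generates the Hamiltonian flow of $H$. In the canonical cotangent-bundle symplectic case this is the classical generating-function construction, but in a general bi-realization it relies critically on the symplectic orthogonality of the $\alpha$- and $\beta$-fibres together with the (anti-)Poisson nature of source and target, i.e. all three axioms of Definition \ref{def:bireal}. I would invoke this as a black box from \cite{oscar} rather than reproving it; the remainder of the argument is then a routine combination of Taylor expansion, the implicit function theorem, and Proposition \ref{prop:bi-real-poisson}.
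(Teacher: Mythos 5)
Your proposal follows essentially the same route as the paper's own (outline of a) proof: identify the iteration with the map $\underline{\Sigma}_h$ induced by the Lagrangian bisection $Gr(dF_h)$, invoke Proposition \ref{prop:bi-real-poisson} for the Poisson and time-dependent-Hamiltonian properties, and recognise Step~1 as the order-$k$ Taylor recursion of the Hamilton--Jacobi equation $\partial_t S_t = H\bigl(\alpha(\cdot,\nabla S_t)\bigr)$, black-boxing from \cite{oscar} the fact that the exact solution generates the exact flow of $H$. One point where the paper's outline is sharper than your assembly: Definition \ref{def:HamPoissonInt} asks for a single family $(H_t)$ with $H_t-H=\smallO{h^k}$ whose integral curve is $h\mapsto\phi_h(x)$, and Proposition \ref{prop:bi-real-poisson} applied at fixed $h$ only yields, for each $h$, \emph{some} time-dependent Hamiltonian whose time-$1$ flow is $\phi_h$ --- it gives neither the single family nor the estimate. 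The paper instead exhibits $H_t$ explicitly as $\tilde S_t(x)=\partial_t S_t(y)$ with $(y,\nabla S_t(y))$ the point of $Gr(dS_t)$ over $x$, from which both the integral-curve property and $\tilde S_t - H = \smallO{t^k}$ (via the Hamilton--Jacobi recursion) follow at once; your step (iii) contains all the ingredients to do this, so this is a matter of assembly rather than a genuine gap.
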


\begin{proof}(See \cite{oscar})
Let us give a brief outline of the proof of this theorem, which will also explain how the
time-dependent Hamiltonian $H_t $ whose flow at time $h$ matches exactly $ x_n$ to $ x_{n+1}$ is constructed.

To start with, recall the two points that we saw in Section \ref{sec:PHI} about the set
 $Gr(dS) := \left\{(x, d_x S) , x \in \mathcal U\right\} $ 
 for $S \in C^\infty(\mathcal U)$ 
\begin{enumerate}
    \item It is a Lagrangian subset of $\mathcal U \times \mathbb R^n $.
    \item Provided that the differential of the function $S$ is small enough, it is a bisection\footnote{Meaning that restrictions of $\alpha$ and $\beta$ to $Gr(dS)$ are invertible.} of $\mathcal W \toto \mathcal U$.
\end{enumerate}
As a consequence, as we saw in Proposition \ref{prop:bi-real-poisson}, the map $\beta \circ \alpha^{-1}|_{Gr(dS)} $ is a Poisson map.

It is a more subtle result that if the function $S$ has a small enough differential, then the Poisson morphism is the time $1$-flow for a time dependent Hamiltonian function. For instance, if $S$ depends on a parameter $h $, i.e. $S= S_h $ where $ S_t$ is a time dependent function with $S_0=0$, then this time dependent Hamiltonian function is given by:
 $$ \tilde{S}_t (x):= \frac{\partial S_t}{\partial t}(y)   $$
where $ y $ is chosen such that $(y, \nabla S_t(y)) \in Gr(dS_t) $. 
Afterwards, the question reduces to finding $S_t$ such that the flow of $\tilde{S}_t$ at time $h$ matches the flow of $H$ at time $h$ up to order $k$ in the variable $h$, as in Step 2.
\end{proof}

\begin{remark}
At order $k=1$ in the symplectic case (i.e. non-degenerate constant Poisson structure), it is easy to check that for the harmonic oscillator $H = \frac{p^2 + q^2}{2}$, one recovers the symplectic mid-point scheme. For a general Hamiltonian $H,$ the present construction gives the fact that an implicit Euler scheme of timestep $\frac{\Delta t}{2}$ composed with an explicit Euler scheme of timestep $\frac{\Delta t}{2}$ is a symplectic integrator of order $1$ and timestep $\Delta t.$ 
More generally, for higher orders the constructed Poisson Hamiltonian integrators for symplectic structures will be symplectic integrators, but a priori different from the standard symplectic Runge-Kutta methods.
\end{remark}

\begin{remark}
We have mentioned in the introduction that the naive idea  ``restrict to a leaf, be symplectic there'' to recover Poisson globally, does not work because is almost never constructive. 
But the other way around it is actually fruitful: now having constructed a Poisson Hamiltonian integrator forcing the trajectory to stay on the correct leaf, one can apply the backward analysis techniques (restricted to leaves) for error estimates.
\end{remark}

\begin{remark}
    Recall that in the case of linear Poisson structures of Proposition \ref{Lie-Poisson}, the construction of the bi-realisation amounts to computation of the coadjoint action of $G$ on $\mathfrak g$, and construction of 
a local diffeomorphism:  $ \phi \colon G \to \mathfrak g$ with its differential at $1$ being the identity.    

The obtained Hamiltonian Poisson integrator of order $1$ is of the form:
 $$ x_{n+1} = {\mathrm{Ad^*}}_{\phi^{-1}(\Delta t \nabla y_n H)}  x_n $$
 which is certainly not surprising: any such numerical scheme stays in the symplectic leaf where one starts from. The same remark about the point $y_n$ outside this leaf holds. 

An obvious choice for $\phi $ is the inverse of the exponential map, but there is some freedom in it: any such a local diffeomorphism can be used to compute an Hamiltonian Poisson integrator up to order $k$. It is important, however, to be able to compute easily its differential and its inverse.
\end{remark}

\section{Numerical tests}
\label{sec:num}
In this last section we illustrate the advantages of Poisson Hamiltonian integrators on a couple of examples.

\subsection{The Rigid Body}

First turn to the linear Poisson structures -- a good example of those can be provided by the dynamics of a rigid body  about a periodic orbit.

The equations governing the system read: 
$$
  \dot x = -x \wedge J.x,
$$
where: $\wedge$ denotes the vector product in $\mathbb{R}^3,$ and  the symmetric positive matrix $J$ is the inertia tensor of the body.

It is a Hamiltonian differential equation for $\pi(x) = \begin{pmatrix}
    0 & -x_3 & x_2 \\
    x_3 & 0 & -x_1\\
    -x_2 & x_1 & 0
\end{pmatrix}$ and $H(x) = \frac{1}{2}\text{Tr}\Big(j(x)^T . J . j(x)\Big)$ where 
$j \colon         \mathbb{R}^3 \widetilde{\longrightarrow} so(3)$ given by:
$$
        x \longmapsto \begin{pmatrix}
    0 & -x_3 & x_2 \\
    x_3 & 0 & x_1\\
    -x_2 & -x_1 & 0
\end{pmatrix}.
$$
We consider the inertia tensor
$J = \begin{pmatrix}
 1 & 0 & 0\\
0 & \pi & 0\\
0 & 0 & 100
\end{pmatrix}$ and $x_0 = 
\begin{pmatrix}
    1\\
    1\\
    1
\end{pmatrix}$
so that the trajectory is given by Figure \ref{fig:rigid_traj}.
\begin{figure}[htp]
    \centering
    \includegraphics{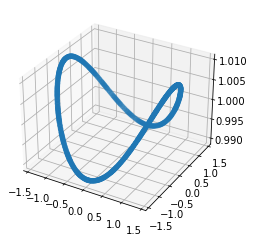}
    \caption{The trajectory of the angular velocity of a rigid body in $\mathbb{R}^3.$}
    \label{fig:rigid_traj}
\end{figure}
Numerical simulations are for timestep $\Delta t = 10^{-4}.$ The Poisson Hamiltonian integrator of order 2 behaves much better than the Runge-Kutta method of order 4 in the preservation of both Casimir and Hamiltonian levels (Figure \ref{fig:Rigid_body_ham_RK4_vs_phi2}). 

Details are a bit more involved. The error of the traditional method depends linearly on the number of iterations and so diverges from the continuous (closed) trajectory. A Poisson Hamiltonian integrator preserves Casimir level at machine precision and oscillates around a Hamiltonian value with the amplitude depending on $\Delta t^k,$ $k$ being the order of the method. We emphasize that this distance \textit{does not} depend on the amount of iteration. One recovers a typical stability phenomenon of symplectic integrators, already noticed and explained in \cite{Benettin1994}. A zoom on Hamiltonian errors is made in Figure \ref{fig:Rigid_body_ham_PHI2}. For Poisson Hamiltonian integrators, a theoretical explanation relies on the Magnus formula for Poisson structures introduced in \cite{oscar}. Those phenomena are illustrated on the schematic section of the trajectory -- Figure \ref{fig:Rigid_body_illustration}.

\begin{figure}[htp]
        \centering
        \begin{subfigure}[b]{\textwidth}
            \centering
            \includegraphics[width=\textwidth]{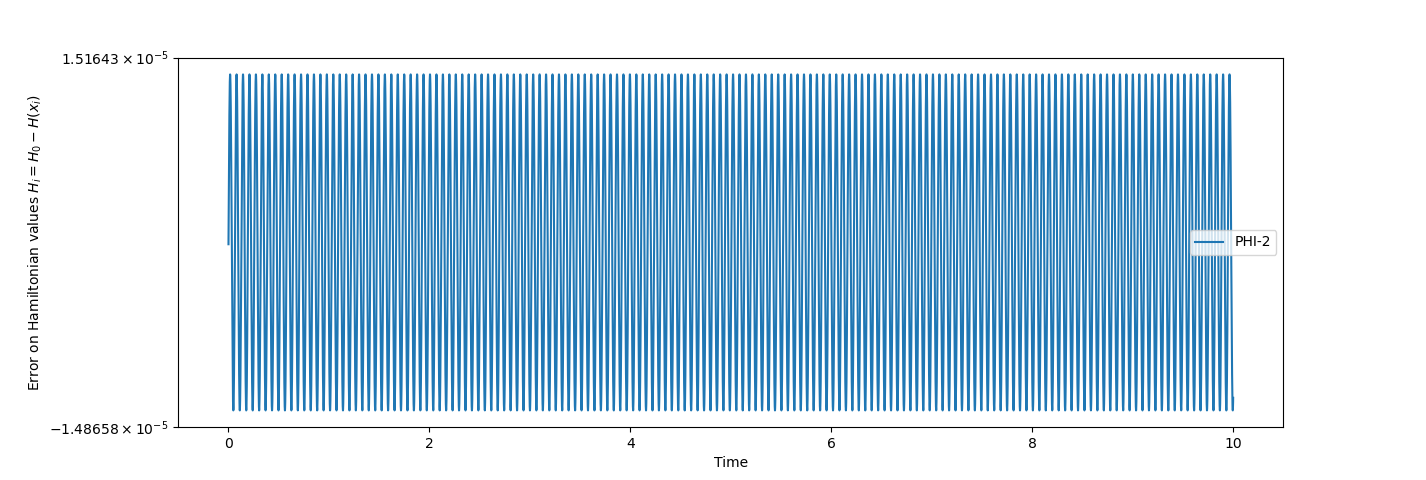}
            \caption{Errors on Hamiltonian values for PHI-2}
            \label{fig:Rigid_body_ham_PHI2}
        \end{subfigure}
        \begin{subfigure}[b]{0.475\textwidth}
            \centering
            \includegraphics[width=\textwidth]{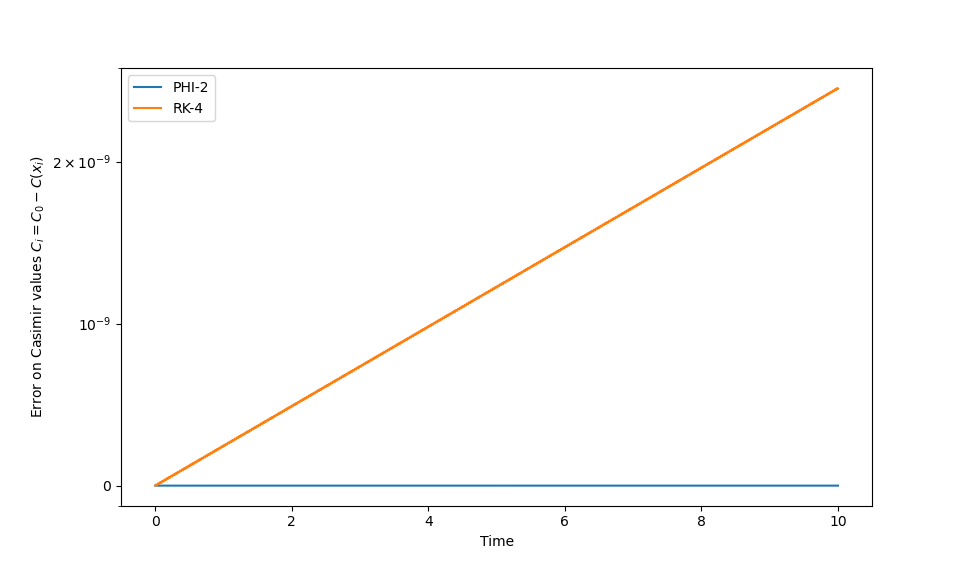}
            \caption{Error on Casimir values}
        \label{fig:Rigid_body_cas_RK4_vs_phi2}
        \end{subfigure}
        \hfill
        \begin{subfigure}[b]{0.475\textwidth}  
            \centering 
            \includegraphics[width=\textwidth]{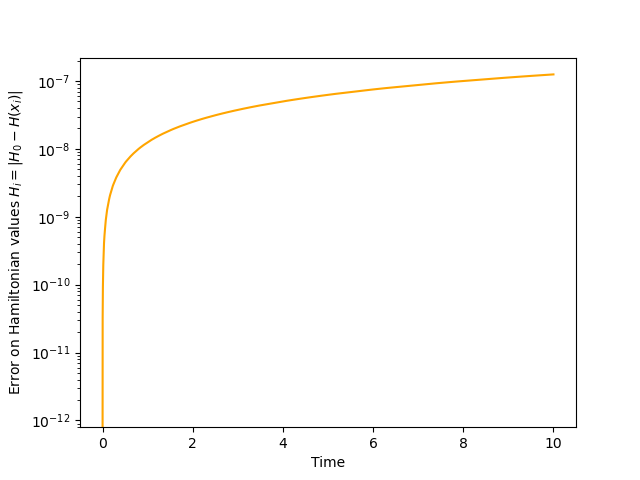}
            \caption{Errors on Hamiltonian values for RK-4}
        \end{subfigure}
        \caption{Comparison between Runge-Kutta 4 and our Poisson Hamiltonian integrator at order 2 for the Rigid Body dynamics}
        \label{fig:Rigid_body_ham_RK4_vs_phi2}
\end{figure}

\begin{remark}
    The Casimir is the square of the norm. Hence Figure \ref{fig:Rigid_body_cas_RK4_vs_phi2} indicates that RK-4 iterations will converge to $0$ in $\mathbb{R}^3,$ which is a fixed point of the dynamics as well as a singular leaf of the foliation of the total space. This lead on long run simulations to pathological behaviours. In turn, it stresses the importance of numerical methods preserving leaves of a singular foliation such as the ones appearing in Poisson structures.
\end{remark}

\begin{figure}[htp]\label{fig:Rigid_body_picture}
        \centering
        \begin{subfigure}[b]{0.475\textwidth}
            \centering
            \includegraphics[width=\textwidth]{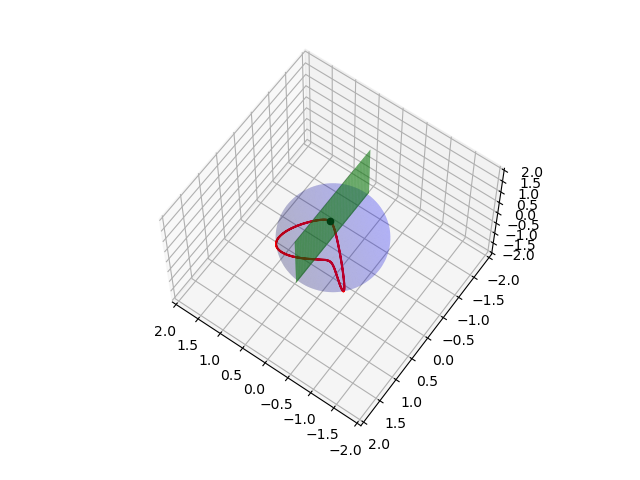}
            \caption{A Poincaré section of the trajectory (green plane)}
            \label{fig:Rigid_body_section}
        \end{subfigure}
        \hfill
        \begin{subfigure}[b]{0.475\textwidth}  
            \centering 
            \includegraphics[width=0.7\textwidth]{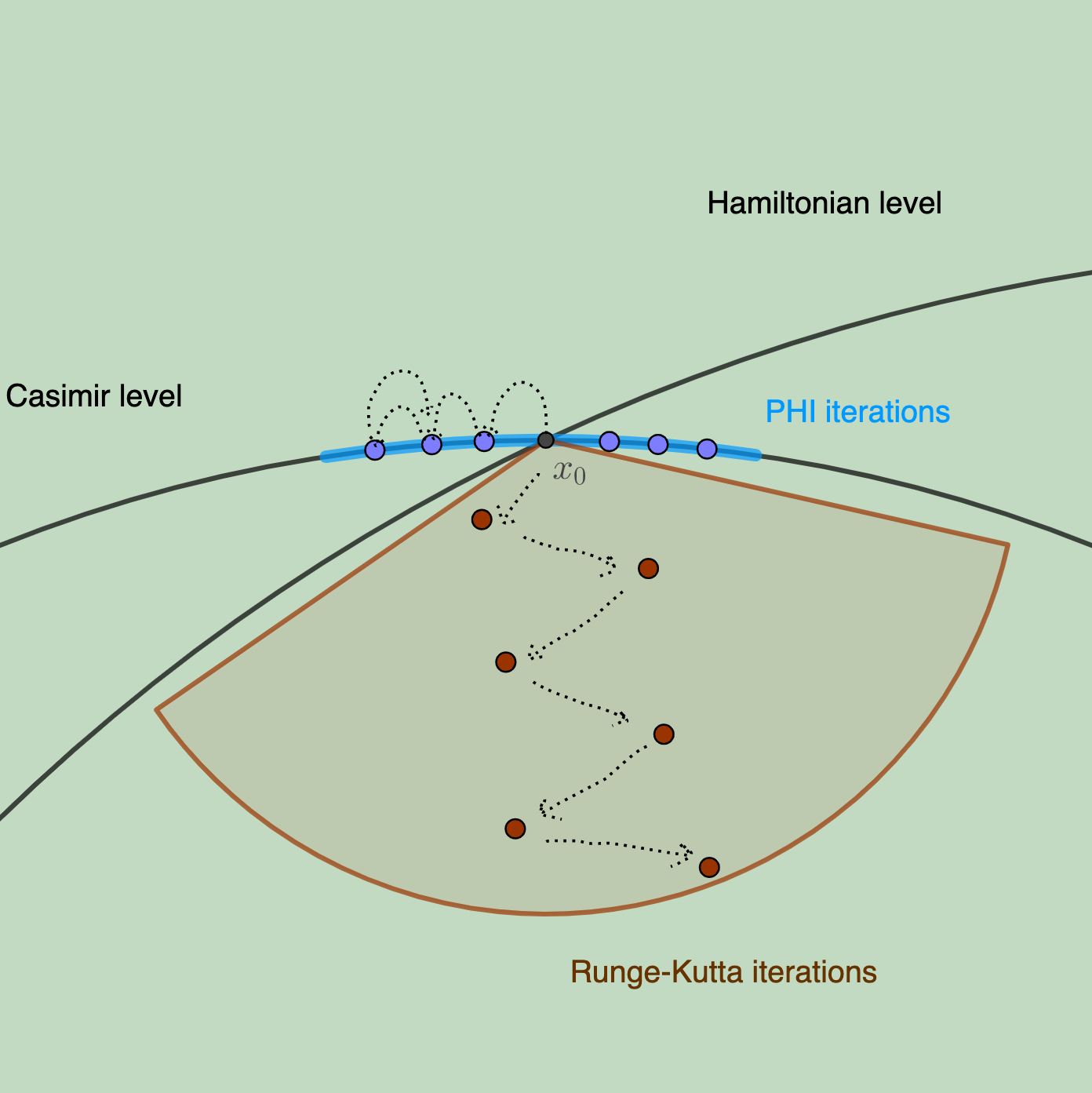}
            \caption{Intersection between (interpolated) discrete trajectories and the Poincaré section}
            \label{fig:Rigid_body_iterations_on_section}
        \end{subfigure}
        \caption{Illustration of the difference of behaviour of two numerical methods with respect to Hamiltonian and Casimir levels}
        \label{fig:Rigid_body_illustration}
\end{figure}

\newpage

\subsection{The Lotka-Volterra System}
Recall the behaviour of the PHI and RK-2 from section \ref{sec:LVnum} -- we now have the correct language to explain it, studying in particular the Casimirs of the system.

The Poisson structure of the generic Lotka-Volterra system coincides with the quadratic one of Equation \eqref{eq:quad-Poisson}, fully encoded in an $n\times n$ matrix $A$. The dynamics is governed by a linear Hamiltonian $H = \sum_{i=1}^n x_i$. 

\begin{proposition}
Let $u \in \text{Ker} A.$ 
\begin{align*}
  f \colon \mathbb{R}^n &\to \mathbb{R}\\
  x &\mapsto \prod_{1 \leq i \leq n}x_i^{u_i}
\end{align*}
is a local Casimir of the quadratic Poisson structure given by the matrix $A.$
\end{proposition}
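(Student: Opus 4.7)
The plan is to verify the Casimir condition $\{f, g\} = 0$ for all smooth $g$ by showing that the Hamiltonian vector field $X_f$ vanishes identically on the open set where $f$ is well-defined (for instance the positive orthant if the $u_i$ are arbitrary reals, or any open subset avoiding the coordinate hyperplanes $\{x_i = 0\}$ if the $u_i$ are integers). This is precisely why the statement speaks of a \emph{local} Casimir.

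The computation will exploit the logarithmic derivative. Since $\log f = \sum_j u_j \log x_j$, one has $\partial_i f = (u_i / x_i)\, f$. This is the identity that matches the monomial ansatz $f = \prod_i x_i^{u_i}$ to the quadratic shape $\pi^{ij} = a_{ij} x_i x_j$ of the Poisson tensor: the factor $1/x_i$ from the derivative exactly cancels one of the $x_i$'s in the coefficient. A direct substitution then yields
$$\{f, x_k\} \;=\; \sum_i \pi^{ik}\, \partial_i f \;=\; x_k f \cdot \sum_i a_{ik} u_i,$$
and since only the antisymmetric part of $(a_{ij})$ contributes to the bivector $\pi$, we may assume $a_{ij} = -a_{ji}$, so that $\sum_i a_{ik} u_i = -(Au)_k = 0$ for $u \in \ker A$.

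Finally, since $\{f, x_k\}$ vanishes for every coordinate $x_k$ and the Poisson bracket is a derivation in its second argument, $\{f, g\} = \sum_k (\partial_k g)\,\{f, x_k\} = 0$ for every smooth $g$, which is the Casimir condition. There is no genuine obstacle here: the whole argument is essentially a one-line calculation. The only bookkeeping point is the antisymmetry of the coefficient matrix $A$, which may be assumed without loss of generality since a bivector only sees the antisymmetric part of its coefficient matrix; one could equivalently phrase the hypothesis as $u \in \ker(A - A^T)$ without changing anything.
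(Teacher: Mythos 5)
Your proof is correct. Note that the paper states this proposition without any proof, so there is nothing to compare against; your argument via the logarithmic derivative $\partial_i f = (u_i/x_i)f$, the cancellation against the quadratic coefficients $a_{ik}x_i x_k$, and the reduction to $\{f,x_k\}=0$ by the Leibniz rule is the standard one-line verification, and it correctly accounts for both the word ``local'' (the domain must avoid the coordinate hyperplanes when some $u_i<0$, as for the Casimir $x_1x_3/x_2$ used in the paper's example) and the implicit antisymmetry convention on $A$ (which holds for the Lotka--Volterra matrix the paper actually uses).
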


In the numerical test, we considered $n = 3$ and,
$A = \begin{pmatrix}
0 & 1 & 1\\
-1 & 0 & 1\\
-1 & -1 & 0
\end{pmatrix}$
so that the local Casimir is $C \colon x \mapsto \frac{x_1 x_3}{x_2}.$ Generic symplectic leaves are hyperbolas, and intersecting them with the level surfaces of $H$ one obtains the geometry of the real trajectory (up to time parametrisation).
A Runge-Kutta method does not preserve $C$, while the constructed Poisson Hamiltonian integrator can preserve the Casimir value with machine precision and the Hamiltonian up to any given order in timestep. Clearly the singular behaviour of the trajectory can be observed only provided these conservation laws are respected. Figure \ref{fig:cas_RK2_vs_PHI1} enlightens the stability of a Poisson Hamiltonian integrator in the neighborhood of a singularity observed in Section \ref{sec:LVnum}: on top of preserving the Poisson structure, it stays on a symplectic leaf along iterations.

\begin{figure}[htp]
    \centering
    \includegraphics[width = \textwidth]{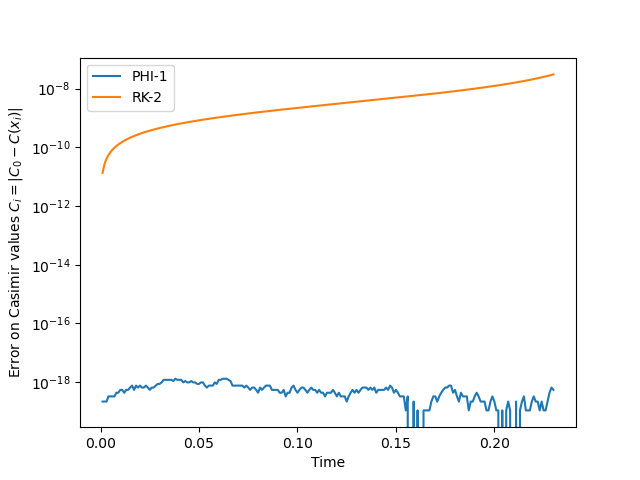}
    \caption{Comparison on Casimir values between PHI-1 and RK-2}
    \label{fig:cas_RK2_vs_PHI1}
\end{figure}

\newpage

\section*{Conclusion / perspectives}
In this paper, we have explained how the idea of the groupoid construction from \cite{oscar} can be implemented for design of Poisson integrators. Let us stress again that the term \emph{Poisson Hamiltonian integrators} we have introduced is important -- it explains the conceptual difference to straightforward constructions present in literature. 

We have seen that even for \emph{simple academic examples} in \emph{generic situations} constructed 
Poisson Hamiltonian integrators proved to be more  accurate than even higher order classical methods, especially on long run simulations. 
But a similar strategy can be implemented with no changes for more complicated systems of ordinary differential equations -- we are working on a symbolic package for automatic generation of the simulation source codes for that (\cite{OAV}). 
Moreover, similar methods can be designed even for Poisson Hamiltonian partial differential equations, which often appear in fluid dynamics and waves simulations. The key idea there is to use the locality of discretisation in space to spell-out the groupoid structure maps -- we intend to explore this direction in further works. 

\newpage
\textbf{Acknowledgments.} We are thankful to participants of the Geometry and Mechanics working group (La Rochelle, M2N team) and the Seminar on Geometry, Mechanics and Control (ICMAT -- IMAULL) for their valuable feedback.
The last section benefits from fruitful remarks of the CNRS Research Regroupement ``Differential Geometry and Mechanics''.
We appreciate enlightening discussion with Pol Vanhaecke, Dina Razafindralandy and Aziz Hamdouni at various stages of this work.
We are also thankful to Antoine Falaize for his help in implementation of the symbolic computations mentioned in remark \ref{ca}.


\begin{thebibliography}{60}
    
\bibitem{oscar} 
    O. Cosserat,
    \emph{Symplectic groupoids for Poisson integrators},
    Journal of Geometry and Physics,
    2022.

\bibitem{OAV}
   O. Cosserat, A. Falaize, V. Salnikov, 
   \emph{On automatic generation of Poisson numerical methods of higher order}, in preparation.


\bibitem{Weinstein1987}
      A. Coste, P. Dazord, A. Weinstein,
      \emph{Groupoïdes symplectiques},
      Publications du Département de Mathématiques de Lyon,
      1987.

\bibitem{Crainic2021}
    M. Crainic, R. Fernandes, I. Mărcuţ,
    \emph{Lectures on Poisson Geometry},
    American Mathematical Society.

\bibitem{Crainic2011}
    M. Crainic, I. Mărcuţ,
    \emph{On the existence of symplectic realizations},
    Journal of symplectic geometry,
    2010.

\bibitem{Zung2005}
    J.-P. Dufour, N. Tien Zung,
    \emph{Poisson Structures and their Normal Forms},
    Birkhäuser Verlag, 
    2005.

\bibitem{deLeon2017}
    S. Ferraro, M. de Leon, J. C. Marrero, D. Martın de Diego, M. Vaquero,
    \emph{On the Geometry of the Hamilton–Jacobi Equation and Generating Functions},
    Archive for Rational Mechanics and Analysis,
    2017.

\bibitem{Ratiu2011}
        F. Gay-Balmaz, D. D. Holm, V. Putkaradze, T. S. Ratiu,
        \emph{Exact geometric theory of dendronized polymer dynamics},
        Advances in Applied Mathematics,
        2011.

\bibitem{Ratiu2013}
        F. Gay-Balmaz, T. S. Ratiu, C. Tronci,
        \emph{Equivalent Theories of Liquid Crystal
        Dynamics},
        Archive for Rational Mechanics and Analysis,
        2013.

\bibitem{Ge1990}
    Z. Ge,
    \emph{Generating Functions, Hamilton-Jacobi Equations and Symplectic Groupoids on Poisson Manifolds},
    Indiana University Mathematics Journal,
    1990.

\bibitem{Benettin1994}
    G. Benettin, A. Giorgilli,
    \emph{On the Hamiltonian Interpolation of Near-to-the-Identity Symplectic Mappings with Application to Symplectic Integration Algorithms},
    Journal of Statistical Physics,
    1994.

\bibitem{Grmela2018}
    M. Grmela,
    \emph{GENERIC guide to the multiscale dynamics and thermodynamics},
    Journal of Physics Communications,
    2018.

\bibitem{Hairer2006}
          E. Hairer, C. Lubich, G. Wanner,
          \emph{Geometric Numerical Integration},
          Springer Series in Computational Mathematics,
          2002.

\bibitem{Jay2003}
    L.O. Jay,
    \emph{Preserving Poisson structure and orthogonality in numerical integration of differential equations}
    Computers \& Mathematics with Applications,
    2004.

\bibitem{Karasev1987}
    M. V. Karasev,
    \emph{Analogues of the objects of Lie group theory for nonlinear Poisson brackets}
    Mathematics of the USSR-Izvestiya,
    1987.

\bibitem{vanhaecke2016}
        T. E. Kouloukas, G. R. W. Quispel and P. Vanhaecke,
        \emph{Liouville integrability and superintegrability of a generalized Lotka-Volterra system and its Kahan discretization},
        Journal of Physics A: Mathematical and Theoretical, Volume 49, Number 22,
        2016.

\bibitem{Li2018}
    S. Li and D. Rupel,
    \emph{Symplectic groupoids for cluster manifolds},
    Journal of Geometry and Physics,
    2018.

\bibitem{Liu2013}
    C. Liu,
    \emph{A Lie–Poisson bracket formulation of plasticity and the computations based on the Lie-group $SO(n)$},
    International Journal of Solids and Structures,
    2013.

\bibitem{Marle1987}
    P. Libermann, C.-M. Marle,
    \emph{Symplectic Geometry and Analytical Mechanics},
    Kluwer Academic Publishers,
    1987.

\bibitem{Mackenzie1987}
          K. Mackenzie,
          \emph{Lie Groupoids and Lie Algebroids in Differential Geometry},
          Cambridge University Press,
          1987.

\bibitem{Marle1990}
    C.-M. Marle,
    \emph{Géométrie des systèmes mécaniques à liaisons actives},
    Compte-Rendu de l'Académie des Sciences,
    1990.

\bibitem{Mac1993}
        R. I. McLachlan,
        \emph{Explicit Lie-Poisson Integration and the Euler Equations},
        Physical Review Letters,
        1993.


\bibitem{LPV2012}
    A. Pichereau, C. Laurent-Gengoux, P. Vanhaecke,
    \emph{Poisson structures},
    Springer-Verlag,
    2012.

\bibitem{aziz} 
    D. Razafindralandy, A. Hamdouni, 
    \emph{A review of some geometric integrators}, 
    Advanced Modeling and Simulation in Engineering Sciences, 5:16, 2018.


\bibitem{LHS} 
    V. Salnikov, A. Hamdouni, D. Loziienko,
    \emph{Generalized and graded geometry for mechanics: a comprehensive introduction},
    Mathematics and Mechanics of Complex Systems, Vol. 9, No. 1, 
    2021.

\bibitem{Verlet1967}
    Loup Verlet,
    \emph{Computer "Experiments" on Classical Fluids. I. Thermodynamical Properties of Lennard-Jones Molecules},
    Phys. Rev. 159, 98,
    1967.

\bibitem{Volterra1931}
    V. Volterra,
    \emph{Leçons sur la Théorie Mathématique de la Lutte pour la Vie},
    Gauthier-Villars et C\textsuperscript{ie},
    1931.

\bibitem{Xu1991}
    P. Xu,
    \emph{Morita Equivalence of Poisson Manifolds},
    Communications in Mathematical Physics,
    1991.

\bibitem{yoshida} 
    H. Yoshida,
    \emph{Construction of higher order symplectic integrators}, 
    Phys. Lett. A 150 (5–7): 262,
    1990.

\bibitem{daSilva1999} 
A. Cannas da Silva, A. Weinstein,
Geometric models for noncommutative algebras, 
Berkeley Mathematics Lecture Notes, {10}, AMS, 1999


\end{thebibliography}
\end{document}